\documentclass[11pt]{article}
\font\smallit=cmti10

\usepackage{amssymb,latexsym,amsmath,epsfig,amsthm} 
\usepackage{hyperref}

\makeatletter
\newcommand{\tpmod}[1]{{\@displayfalse\pmod{#1}}}
\makeatother

\makeatletter

\renewcommand\section{\@startsection {section}{1}{\z@}
{-30pt \@plus -1ex \@minus -.2ex}
{2.3ex \@plus.2ex}
{\normalfont\normalsize\bfseries}}

\renewcommand\subsection{\@startsection{subsection}{2}{\z@}
{-3.25ex\@plus -1ex \@minus -.2ex}
{1.5ex \@plus .2ex}
{\normalfont\normalsize\bfseries}}

\renewcommand{\@seccntformat}[1]{\csname the#1\endcsname. }

\makeatother

\newtheorem{theorem}{Theorem}
\newtheorem{lemma}{Lemma}
\newtheorem{claim}{Claim}
\newtheorem{conjecture}{Conjecture}

\usepackage{amsfonts}
\usepackage{booktabs}
\usepackage{siunitx}

\begin{document}
\begin{center}
\uppercase{\bf Explicit algebraic numbers all whose integer parts of powers are always composite}

\vskip 20pt
{\bf Suyeong Hahn}\\
{\smallit Choate Rosemary Hall, Wallingford, CT, USA}\\
{\tt shahn27@choate.edu}\\
\vskip 10pt
{\bf Dan Ismailescu}\\
{\smallit Department of Mathematics, Hofstra University,
Hempstead, NY, USA.}\\
{\tt dan.p.ismailescu@hofstra.edu}\\
\vskip 10pt
{\bf Ganghun Kim }\\
{\smallit The Lawrenceville School, Lawrenceville, NJ, USA}\\
{\tt jasonghkim@gmail.com}\\
\vskip 10pt
{\bf Minseung Kim}\\
{\smallit Collegiate School, New York, NY, USA}\\
{\tt minibambi09@gmail.com}\\
\vskip 10pt

\end{center}



\begin{abstract}
We present several explicit classes of algebraic integers $\alpha>1$ such that $\lfloor \alpha^n\rfloor$ is composite for all but finitely many $n$.
\end{abstract}

\pagestyle{myheadings}
\thispagestyle{empty}
\baselineskip=12.875pt
\vskip 30pt


\section{Introduction}

One of the oldest and most challenging questions in number theory is to determine whether a given integer sequence contains infinitely many prime numbers.
The problem is still unsolved for seemingly very simple looking sequences such as $\{n^2+1\}$ or $\{F_n\}$, the Fibonacci sequence.

In this paper we study sequences which are integer parts of powers of a fixed real number, $\lfloor \alpha^n\rfloor, n\ge 1$.
Baker and Harman \cite{bh} proved that the sequence $\{\lfloor \alpha^n\rfloor\}$ contains infinitely many primes for almost all $\alpha>1$. On the other hand, they proved that there are uncountably many numbers $\alpha$ such that $\lfloor \alpha^n\rfloor$ is composite for all positive integers $n$. Their method does not allow to give explicit expressions for such $\alpha$.


Forman and Shapiro \cite{fs} proved that if $\alpha=3/2$ or $\alpha=4/3$ the sequence $\{\lfloor \alpha^n\rfloor\}$ contains infinitely many composite numbers. Dubickas and Novikas \cite{dn} showed that the same conclusion holds if $\alpha=5/4$. They also gave an explicit example of a transcendental number
$\alpha$ (obtained as the limit of a certain recurrent integer sequence) for which the sequence $\{\lfloor \alpha^n\rfloor\}$ has infinitely many terms in an arbitrary arithmetic progression.
Cass \cite{cass} showed that if $\alpha$ is a quadratic unit then $\{\lfloor \alpha^n\rfloor\}$ contains infinitely many composite numbers. This result was later extended by Dubickas \cite{dubickas1} to all Pisot and Salem numbers.

An argument of Huxley \cite{bh} shows that for every $n\in \mathbb{N}$, $\lfloor ((1+\sqrt[3]{2}+\sqrt[3]{4})^{2n}\rfloor$ is equal to $2$ or $3$ modulo $6$ and therefore composite for all $n\ge 1$. Alkauskas and Dubickas \cite{ad} extended Huxley's idea and constructed Pisot numbers of arbitrary degree such that the integer parts of their powers are always even, and therefore composite. For example, their method implies that $\lfloor ((5+\sqrt{17})/2)^n \rfloor$ is even for all $n\ge 1$.

In this study, we build upon the results presented in \cite{AaronMe} and provide several new classes of Pisot numbers $\alpha$ which have the property that $\lfloor \alpha^n\rfloor$ is composite for all but possibly finitely many $n\ge 1$. 

The paper is organized as follows.

Section \ref{sectionpisot} contains several basic facts regarding Pisot polynomials and Pisot numbers.

In Section \ref{sectionquadratic} we show the connection between quadratic Pisot numbers and Lucas sequences of the second kind. We then use the divisibility properties of the latter to construct several families of quadratic Pisot numbers which possess the desired property.

Section \ref{sectioncubic} contains examples of cubic Pisot numbers; in particular, we identify an infinite set of cube-free integers $m$ for which
$\lfloor \mu^n\rfloor$ is composite for all $n\ge 4$ where $\mu$ is the fundamental unit in the pure cubic field $\mathbb{Q}(\sqrt[3]{m})$.
In Section \ref{sectionbiquadratic} we exhibit several biquadratic Pisot numbers $\alpha= (c_1+c_2\sqrt{p}+c_3\sqrt{q}+c_4\sqrt{p}\sqrt{q})/2$ with the property that $\lfloor \alpha^n\rfloor$ is composite for all $n\ge 3$.

In Section \ref{sectiondegreed} we generalize a theorem of Alkauskas and Dubickas and prove that for any given integers $d\ge 2$, $p\ge 2$ there exists a Pisot polynomial of degree $d$, whose largest root $\alpha$ has the property that $\lfloor \alpha^n\rfloor$ is an integer multiple of $p$ for all $n\ge 1$. The paper concludes with a section containing several remarks and a few open questions.

\section{\bf Pisot polynomials and Pisot numbers}\label{sectionpisot}

A monic polynomial with integer coefficients, $P$, satisfying $P(0)\neq 0$, is said to be a \emph{Pisot
polynomial} if $P$ has a unique root, say $\alpha$, which is greater than $1$ and
all the other roots have modulus less than $1$. In this case, $P$ is irreducible over $\mathbb{Q}$, and
$\alpha$ is said to be a \emph{Pisot number}.

Pisot \cite{pisot} introduced these numbers in his 1938 thesis. Salem \cite{salem} proved that the set of Pisot numbers
is closed and conjectured that the smallest Pisot number is $\theta_1=1.324\ldots$, the positive root of $z^3-z-1=0$.
This was confirmed by Siegel \cite{siegel} who also showed that the second smallest Pisot number is $\theta_2=1.380\ldots$,
the positive root of $z^4-z^3-1=0$. Siegel also proved that all other Pisot numbers lie in the interval $(\sqrt{2}, \infty)$.

Clearly, if $\deg(P)=1$ then $P$ is a Pisot polynomial if and only if $P(0)\le -2$; similarly,
if $\deg(P)=2$ then $P$ is a Pisot polynomial if and only if $P(-1)>0$ and $P(1)<0$.
While in general there is no simple characterization of Pisot numbers, there exist fast algorithms which allow to test whether a given polynomial is Pisot
\cite{duffin, zaimi}.

\section{Quadratic Pisot numbers}\label{sectionquadratic}

Let $P(z)=z^2-az+b$ be a quadratic Pisot polynomial whose roots are $\alpha>1$ and $\beta \in (-1,1)$. As mentioned earlier, this is equivalent to
requiring $P(-1)=1+a+b>0$ and $P(1)=1-a+b<0$. In particular, $a^2-4b>0$ and $a\ge 1$.

Define $U_n=\alpha^n+\beta^n$, for all $n\ge 0$. It is immediate to note that $\{U_n\}$ is an integer sequence given by the initial conditions
$U_0=2$, $U_1=a$, and by the recurrence relation $U_n=aU_{n-1}-bU_{n-2}$ for all $n\ge 2$. This sequence is known as the Lucas sequence of the second kind with parameters $a$ and $b$. It can be easily shown that since $a\ge 1$ and $a^2-4b>0$ the sequence $\{U_n\}$ is strictly increasing for all $n\ge 1$.

Since $\beta\in(-1,1)$ it follows that $\lfloor \alpha^n\rfloor =U_n-1$ or  $\lfloor \alpha^n\rfloor =U_n$ depending on whether $\beta^n>0$ or $\beta^n<0$.
It is therefore useful to investigate the divisibility properties of the sequence $\{U_n\}$.

\begin{lemma}\label{lemmaLucas}
Let $a$ and $b$ be integers so that $a\ge 1, b\neq 0$ and $\Delta=a^2-4b>0$. Then, the Lucas sequence of the second kind defined by $U_0=2, U_1=a$, $U_n=aU_{n-1}-bU_{n-2}$ for all $n\ge 2$ has the following properties:

(a) $U_n\equiv 0 \pmod {U_1}$ for all $n\equiv 1 \pmod 2$.

(b) $U_n\equiv a^n \pmod b$ for all $n\ge 1$.

(c) For every prime number $p$ and every $n\ge 1$ we have $U_n\equiv U_{n+p^2-1} \pmod p$.
\end{lemma}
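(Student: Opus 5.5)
For all three parts I will work with the closed form $U_n=\alpha^n+\beta^n$, where $\alpha,\beta$ are the roots of $z^2-az+b$ (distinct and real since $\Delta>0$), together with the recurrence, and I will do each part by induction or by elementary algebraic-integer arguments.

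For (a), I will prove by induction the stronger statement that $U_n\equiv 0\pmod a$ for odd $n$ and $U_n\equiv 2(-b)^{n/2}\pmod a$ for even $n$. Reducing the recurrence modulo $a$ gives $U_n\equiv -bU_{n-2}\pmod a$. Since $U_1=a\equiv 0$, every odd-indexed term is $\equiv 0 \pmod a$. This settles (a), because $U_1=a$. Alternatively, I can use the factorization $\alpha^n+\beta^n=(\alpha+\beta)\sum_{k=0}^{n-1}(-1)^k\alpha^{n-1-k}\beta^k$ for odd $n$. The sum is symmetric in $\alpha,\beta$, so it is an integer polynomial in $a$ and $b$.

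For (b), reducing the recurrence modulo $b$ gives $U_n\equiv aU_{n-1}\pmod b$ for $n\ge 2$. Together with $U_1=a$, induction gives $U_n\equiv a^n\pmod b$ for all $n\ge1$. The case $n=0$ is excluded, which is consistent with $U_0=2$.

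For (c), I will argue in the ring $\mathbb{Z}[\alpha]/(p)\cong \mathbb{F}_p[z]/(z^2-\bar a z+\bar b)$, a ring with $p^2$ elements. I split into cases according to how $z^2-\bar a z+\bar b$ factors over $\mathbb{F}_p$.
\begin{itemize}
\item \emph{Distinct roots $\bar\alpha,\bar\beta$ in $\mathbb{F}_p$ or in $\mathbb{F}_{p^2}$.} Then $U_n\equiv\bar\alpha^n+\bar\beta^n$. When $p\nmid b$ both roots are nonzero and lie in $\mathbb{F}_{p^2}^\times$, whose order is $p^2-1$, so $\bar\alpha^{p^2-1}=\bar\beta^{p^2-1}=1$. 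If $p\mid b$, one root is $0$, and $0^{n}=0^{n+p^2-1}$ holds for $n\ge1$. This is exactly why the statement requires $n\ge1$. The other root $\bar a$ satisfies $\bar a^{n+p^2-1}=\bar a^n$ because $p-1$ divides $p^2-1$ (or because $\bar a=0$).
\item \emph{Repeated root $\bar\alpha=\bar a/2$}, which happens when $p\mid\Delta$. This is the main obstacle. For odd $p$, the sequence satisfies $U_n\equiv 2\bar\alpha^n\pmod p$; I will verify this by checking it satisfies the recurrence and matches $U_0$ and $U_1$. Periodicity then follows from the previous case. For $p=2$, $p^2-1=3$, and I need $U_n\equiv U_{n+3}\pmod 2$. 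Here $U_n\equiv aU_{n-1}+bU_{n-2}$ with $U_0\equiv0$, so a direct check of the four parity cases for $(a,b)$ suffices.
\end{itemize}

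A cleaner uniform route is to note that the map $x\mapsto x^{p^2}$ is the Frobenius-squared map on $\mathbb{F}_p[z]/(P)$. In every case it fixes $\bar\alpha^n$ and $\bar\beta^n$ up to the nilpotent part. Because of that nilpotent part, I will fall back on the explicit case analysis above.
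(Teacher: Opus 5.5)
Your proofs of (a) and (b) are the same inductions on the recurrence, reduced modulo $a$ and modulo $b$, that the paper uses; your factorization argument for (a) is a valid alternative. For (c) you take a genuinely different route. The paper stays inside $\mathbb{Z}$. It uses the recurrence to reduce (c) to $U_{p^2}\equiv U_1$ and $U_{p^2+1}\equiv U_2 \pmod p$. For odd $p$ it proves these by multiplying by a power of $2$, expanding $(a\pm\sqrt{\Delta})^{p^2}$ and $(a\pm\sqrt{\Delta})^{p^2+1}$ binomially, discarding the binomial coefficients divisible by $p$, and applying Fermat's little theorem, including $x^{(p^2+1)/2}\equiv x \pmod p$. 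The case $p=2$ is a direct computation with $U_4$ and $U_5$. You instead pass to the roots of $z^2-\bar a z+\bar b$ in $\mathbb{F}_{p^2}$ and use that $\mathbb{F}_{p^2}^\times$ has order $p^2-1$. This explains where $p^2-1$ comes from and sharpens the period to $p-1$ when the roots lie in $\mathbb{F}_p$. It also shows why $n\ge 1$ is needed: there is a zero root when $p\mid b$. The paper's computation is fully elementary and avoids field extensions, but it is less transparent.

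Two small repairs are needed.

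\begin{itemize}
\item The isomorphism $\mathbb{Z}[\alpha]/(p)\cong\mathbb{F}_p[z]/(z^2-\bar a z+\bar b)$ fails when $\Delta$ is a perfect square. For example, $a=3$, $b=2$ gives $\alpha=2$ and $\mathbb{Z}[\alpha]=\mathbb{Z}$. Use $\mathbb{Z}[z]/(z^2-az+b)$ instead, or drop the ring altogether, since your argument never uses it.
\item In the distinct-root case, $\bar U_n=\bar\alpha^n+\bar\beta^n$ needs the same justification you give for the repeated root: $V_n=\bar\alpha^n+\bar\beta^n$ satisfies $V_n=\bar aV_{n-1}-\bar bV_{n-2}$ with $V_0=2$ and $V_1=\bar a$.
\end{itemize}

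That check is valid for roots counted with multiplicity, and also for $p=2$. So the case split you call the main obstacle is unnecessary. Since $x^{p^2}=x$ for every $x\in\mathbb{F}_{p^2}$, including $x=0$, one has $x^{n+p^2-1}=x^{n-1}x^{p^2}=x^n$ for all $n\ge 1$, which gives (c) at once. The nilpotent part you worry about only blocks periodicity of the element $z^n$ in the quotient ring, not of the power sum $U_n$.
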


\begin{proof}
The first two statements can be easily proved via induction. For the third statement, note that it would suffice the show that
$U_1\equiv U_{p^2}\pmod p$ and $U_2\equiv U_{p^2+1}\pmod p$.

One can then prove the full statement using induction on the recurrence relation that defines $U_n$.

We consider the case $p=2$ first. Since $U_1=a$ and $U_4=a^4 - 4a^2b + 2b^2$ it follows that
$U_4-U_1\equiv a^4-a \equiv 0 \pmod 2$. Similarly, since $U_2=a^2-2b$ and $U_5= a^5 - 5a^3b + 5ab^2$ we have that
$U_5-U_2\equiv a^5-a^2-5ab(a^2-b)+2b\equiv 0 \pmod 2$.

Consider next $p$ prime, $p\ge 3$.
Using the Binet formula it follows that
\begin{equation*}
U_{p^2}-U_1=\alpha^{p^2}+\beta^{p^2}-a = \left(\frac{a+\sqrt{\Delta}}{2}\right)^{p^2}+\left(\frac{a-\sqrt{\Delta}}{2}\right)^{p^2}-a.
\end{equation*}
Multiplying both sides by $2^{p^2}$ we obtain
\begin{align*}
2^{p^2}(U_{p^2}-U_1)&= \left(a+\sqrt{\Delta}\right)^{p^2}+\left(a-\sqrt{\Delta}\right)^{p^2}-2^{p^2}a=\\
&=\sum_{k=0}^{p^2} \binom{p^2}{k}a^{p^2-k}\left(\left(\sqrt{\Delta}\right)^k+\left(-\sqrt{\Delta}\right)^k\right)-2^{p^2}a=\\
&=\sum_{j=0}^{(p^2-1)/2}\binom{p^2}{2j}a^{p^2-2j}\cdot 2\left(\sqrt{\Delta}\right)^{2j}- 2^{p^2}a =\\
&=\sum_{j=0}^{(p^2-1)/2}\binom{p^2}{2j}a^{p^2-2j}\cdot 2\left(a^2-4b\right)^{j} -2^{p^2}a.
\end{align*}
Since $\binom{p^2}{2j}\equiv 0 \pmod p$ for all $1\le j\le (p^2-1)/2$, after reducing modulo $p$ the above equality becomes
\begin{equation*}
2^{p^2}(U_{p^2}-U_1)\equiv 2a^{p^2}-2^{p^2}a \equiv 0 \pmod p,
\end{equation*}
since from Fermat's Little Theorem we have $a^{p^2}\equiv a^p\equiv a \pmod p$ and  $2^{p^2}\equiv 2^p\equiv 2 \pmod p$.
It follows that $U_{p^2}\equiv U_1 \pmod p$ as claimed.

We proceed in a similar fashion with the second relation.
Using the Binet formula again it follows that
\begin{equation*}
U_{p^2+1}-U_2=\alpha^{p^2+1}+\beta^{p^2+1}-(a^2-2b) = \left(\frac{a+\sqrt{\Delta}}{2}\right)^{p^2+1}+\left(\frac{a-\sqrt{\Delta}}{2}\right)^{p^2+1}-(a^2-2b).
\end{equation*}
Multiplying both sides by $2^{p^2+1}$ we obtain
\begin{align*}
2^{p^2+1}(U_{p^2+1}-U_2)&= \left(a+\sqrt{\Delta}\right)^{p^2+1}+\left(a-\sqrt{\Delta}\right)^{p^2+1}-2^{p^2+1}(a^2-2b)=\\
&=\sum_{k=0}^{p^2+1} \binom{p^2+1}{k}a^{p^2+1-k}\left(\left(\sqrt{\Delta}\right)^k+\left(-\sqrt{\Delta}\right)^k\right)-2^{p^2+1}(a^2-2b)=\\
&=\sum_{j=0}^{(p^2+1)/2}\binom{p^2+1}{2j}a^{p^2+1-2j}\cdot 2\left(\sqrt{\Delta}\right)^{2j}- 2^{p^2+1}(a^2-2b) =\\
&=\sum_{j=0}^{(p^2+1)/2}\binom{p^2+1}{2j}a^{p^2+1-2j}\cdot 2\left(a^2-4b\right)^{j} -2^{p^2+1}(a^2-2b).
\end{align*}
Since $\binom{p^2+1}{2j}\equiv 0 \pmod p$ for all $0\le j\le (p^2-1)/2$, after reducing modulo $p$ the above equality becomes
\begin{equation}\label{V2}
2^{p^2+1}(U_{p^2+1}-U_2)\equiv 2a^{p^2+1}+2(a^2-4b)^{(p^2+1)/2} -2^{p^2+1}(a^2-2b) \mod p.
\end{equation}
However, since $p$ is odd we have that $x^{(p^2+1)/2}\equiv x \mod p$ for every integer $x$. Indeed, this is obviously true if $x\equiv 0 \pmod p$.
Otherwise, from the Fermat's Little Theorem it follows that $x^{p-1}\equiv 1 \pmod p$ from which $x^{(p^2-1)/2}\equiv 1 \pmod p$ and finally, $x^{(p^2+1)/2}\equiv x \pmod p$, as claimed. In particular, $a^{p^2+1}\equiv a^2 \pmod p$, $(a^2-4b)^{(p^2+1)}/2 \equiv a^2-4b \pmod p$, and $2^{p^2+1}\equiv 4 \pmod p$. Using these into equality \eqref{V2} we finally obtain that
\begin{equation*}
2^{p^2+1}(U_{p^2+1}-U_2)\equiv 2a^2+2(a^2-4b) -4(a^2-2b)\equiv 0 \mod p.
\end{equation*}
Eventually, $U_{p^2+1}\equiv U_2 \pmod p$, as claimed.
\end{proof}

A consequence of Lemma \ref{lemmaLucas} is that the sequence $\{U_n\}_{n\ge 1}$ is  purely periodic modulo any given prime and the period is $p^2-1$.
In particular, we have the following equalities.

\begin{equation}\label{p2}
U_n\equiv
\begin{cases}
a \pmod 2 \quad \text{if}\quad n\equiv 1, 2 \pmod 3,\\
a+ab \pmod 2 \quad \text{if}\quad n\equiv 0 \pmod 3,
\end{cases}
\end{equation}
and

\begin{equation}\label{p3}
U_n\equiv
\begin{cases}
a \pmod 3 \quad \text{if}\quad n\equiv 1, 3 \pmod 8,\\
a^2+b \pmod 3 \quad \text{if}\quad n\equiv 2, 6 \pmod 8,\\
2a^2b+a^2+2b^2 \pmod 3 \quad \text{if}\quad n\equiv 4 \pmod 8,\\
2ab^2+ab+a \pmod 3 \quad \text{if}\quad n\equiv 5, 7 \pmod 8,\\
2a^2b^2+a^2+2b^2 \pmod 3 \quad \text{if}\quad n\equiv 0 \pmod 8.
\end{cases}
\end{equation}
Similar tables can be computed for larger primes, although the expressions will be slightly more complicated.

The results above allow us to identify many quadratic Pisot numbers, $\alpha$, so that $\lfloor \alpha^n \rfloor$ is composite except for all $n\ge 2$.
In particular, if $a$ itself is composite then  $\lfloor \alpha^n \rfloor$ will be composite for all $n\ge 1$.

We split our discussion depending on the sign of $b$.

\begin{theorem}\label{thmbnegative}
Let $a$ and $b$ be integers so that $1\le -b \le a$. Then, the quadratic polynomial $P(z)=z^2-az +b$ is a Pisot polynomial, whose smaller root
$\beta = (a-\sqrt{a^2-4b})/2$ lies in the interval $(-1,0)$ and whose larger root $\alpha = (a+\sqrt{a^2-4b})/2$ is strictly greater than $1$. Moreover, if any of the conditions below is satisfied then $\lfloor \alpha^n \rfloor$ is
composite for all $n\ge 2$.

(a) $\gcd(a^2-1,b)=d>1$.

(b) $\gcd(a^2+1,b)=d>1$, $a\equiv 0 \pmod 3$, and $b\equiv 1 \pmod 3$.

(c) $\gcd(a^4+a^2+1,b)=d>1$, $a\equiv 1 \pmod 2$, and $b\equiv 1 \pmod 2$.

(d) $\gcd(a^4-a^2+1,b)=d>1$, $a\equiv 3\pmod 6$, and $b\equiv 1 \pmod 6$.
\end{theorem}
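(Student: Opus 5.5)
The plan is to combine the sign of $\beta$, which tells us whether $\lfloor\alpha^n\rfloor$ equals $U_n$ or $U_n-1$, with Lemma \ref{lemmaLucas} and the tables \eqref{p2}, \eqref{p3}. For odd $n$ the divisor will be $a$; for even $n$ it will be $d$, $2$ or $3$, depending on $n$ modulo a small number.

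\textbf{Pisot property.} Since $b<0$ and $a\ge1$, we have $P(1)=1-a+b<0$. The hypothesis $-b\le a$ gives $P(-1)=1+a+b>0$. Also $P(0)=b<0$. Hence one root lies in $(-1,0)$ and the other exceeds $1$, so $P$ is Pisot, with $\beta\in(-1,0)$ and $\alpha>1$.

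Because $\beta\in(-1,0)$, we have $\beta^n>0$ for even $n$ and $\beta^n<0$ for odd $n$. Therefore
\[
\lfloor\alpha^n\rfloor=U_n-1 \text{ for $n$ even}, \qquad \lfloor\alpha^n\rfloor=U_n \text{ for $n$ odd}.
\]

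\textbf{Odd $n$.} By Lemma \ref{lemmaLucas}(a), $a\mid U_n$ whenever $n$ is odd. In every case $a\ge2$:
\begin{itemize}
\item if $a=1$, then $b=-1$, and all four gcd's equal $1$, contradicting $d>1$;
\item in (b) and (d) in fact $a\ge3$.
\end{itemize}
Since $\{U_n\}$ is strictly increasing, $U_n>U_1=a$ for $n\ge3$. So $a$ is a proper divisor of $U_n$, and $\lfloor\alpha^n\rfloor$ is composite for odd $n\ge3$.

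\textbf{Even $n$.} Since $d\mid b$, Lemma \ref{lemmaLucas}(b) gives $U_n\equiv a^n\pmod d$. The cases are as follows.
\begin{itemize}
\item \textbf{Case (a).} $a^2\equiv1\pmod d$, so $U_n-1\equiv a^n-1\equiv0\pmod d$ for every even $n$.
\item \textbf{Case (b).} $a^2\equiv-1\pmod d$, so $a^4\equiv1\pmod d$ and $d\mid U_n-1$ when $4\mid n$. If $n\equiv2\pmod4$, then $n\equiv2,6\pmod8$. By \eqref{p3}, $U_n\equiv a^2+b\equiv0+1\equiv1\pmod3$, so $3\mid U_n-1$.
\item \textbf{Case (c).} $a^6\equiv1\pmod d$, since $(a^2-1)(a^4+a^2+1)=a^6-1$. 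So $d\mid U_n-1$ when $6\mid n$. The remaining even $n$ satisfy $n\equiv1,2\pmod3$. By \eqref{p2}, $U_n\equiv a\equiv1\pmod2$, so $2\mid U_n-1$.
\item \textbf{Case (d).} $(a^2+1)(a^4-a^2+1)=a^6+1$ gives $a^{12}\equiv1\pmod d$, so $d\mid U_n-1$ when $12\mid n$.
  \begin{itemize}
  \item For even $n$ with $3\nmid n$, \eqref{p2} with $a$ odd gives $2\mid U_n-1$.
  \item For $n\equiv6\pmod{12}$, we have $n\equiv2$ or $6\pmod8$. Then \eqref{p3} gives $U_n\equiv a^2+b\equiv1\pmod3$, since $a\equiv0$ and $b\equiv1\pmod3$. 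So $3\mid U_n-1$.
  \end{itemize}
\end{itemize}

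\textbf{Properness of the divisor.} It remains to check that each divisor is proper, i.e.\ strictly smaller than $U_n-1$. The divisor is one of $2$, $3$, or $d$, and $d\le|b|\le a$. For even $n\ge2$,
\[
U_n-1\ge U_2-1=a^2-2b-1\ge a^2+1>a,
\]
which exceeds each of these divisors.

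The main obstacle is purely bookkeeping. One must verify that the residue classes in each case cover all even $n$. One must also read the correct entries of \eqref{p2} and \eqref{p3}, and check that $n\equiv2\pmod4$ (respectively $n\equiv6\pmod{12}$) indeed forces $n\equiv2,6\pmod8$.
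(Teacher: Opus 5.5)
Your proposal is correct and follows essentially the same route as the paper: the sign of $\beta$ makes $\lfloor\alpha^n\rfloor$ equal to $U_n$ for odd $n$, which is divisible by $a$ by Lemma~\ref{lemmaLucas}(a), and equal to $U_n-1$ for even $n$, which is divisible by $d$, $2$ or $3$ by Lemma~\ref{lemmaLucas}(b) and the tables \eqref{p2}, \eqref{p3}, using the same residue-class coverings in each case. You also check explicitly that the divisor is proper ($U_n-1\ge U_2-1\ge a^2+1>a\ge d$, and $a\ge 2$ or $a\ge 3$ where needed), a step the paper leaves implicit.
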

\begin{proof}
Start by noting that $P(-1)=1+a+b>0$, $P(0)=b<0$, and $P(1)=1-a+b<0$. It follows that $\beta \in (-1, 0)$ and $\alpha>1$, as claimed.
Let $U_n=\alpha^n+\beta^n$ be the Lucas sequence of the second kind satisfying $U_0=2$, $U_1=a$, and $U_n=aU_{n-1}-bU_{n-2}$ for all $n\ge 2$.

Since $-1<\beta<0$ it follows that
\begin{equation*}
\lfloor \alpha^n \rfloor=
\begin{cases}
U_n \quad \text{if}\,\, n\,\, \text{is odd},\\
U_n-1 \quad \text{if}\,\, n\,\, \text{is even}.
\end{cases}
\end{equation*}
Note that each of the four conditions (a)-(d) above require $|b|=-b\ge 2$ and implicitly $a\ge 2$.
From Lemma \ref{lemmaLucas} (a) it follows that $U_n\equiv 0 \pmod {a}$ whenever $n$ is odd. In particular, $\lfloor \alpha^n \rfloor\equiv 0 \pmod a$ for every odd $n$ and therefore since $a\ge 2$ it follows that $\lfloor \alpha^n \rfloor$ will be composite for all odd $n\ge 3$. The only possible exception is $n=1$ since in this case $\lfloor \alpha \rfloor=a$ and of course $a$ may very well be a prime itself.

It remains to check that $\lfloor \alpha^n \rfloor$ is composite for all $n$ even. Recall that in this case $\lfloor \alpha^n \rfloor=U_n-1$.

From Lemma \ref{lemmaLucas} (b) we have that $U_n\equiv a^n \pmod {b}$ for all $n\ge 1$. If $\gcd(a^2-1,b)=d>1$ then $a^n\equiv 1 \pmod d$ for all even $n$ and therefore $U_n\equiv 1 \pmod d$ for all $n$ even. But this implies that $\lfloor \alpha^n \rfloor\equiv 0 \pmod d$ for all such $n$, and therefore always composite. This proves part (a).

A specific example that fits into this class is $a=4, b=-3$. Then $gcd(a^2-1,b)=3>1$ and $\alpha=2+\sqrt{7}\approx 4.645\ldots$. Since $\lfloor \alpha \rfloor=4$ it follows that $\lfloor(2+\sqrt{7})^n\rfloor$ is composite for all $n\ge 1$.

If $\gcd(a^2+1,b)=d>1$ then $a^4\equiv 1 \pmod d$ and therefore by using Lemma \ref{lemmaLucas} (b) again
$U_{n}\equiv 1 \pmod d$ whenever $n\equiv 0 \pmod 4$.
On the other hand, equality \eqref{p3} tells us that $U_n\equiv a^2+b \equiv 1 \pmod 3$ for all $n\equiv 2 \pmod 4$.
It follows that in this case
\begin{equation*}
\lfloor \alpha^n \rfloor \equiv
\begin{cases}
0 \pmod a \quad \text{if}\,\, n \equiv 1 \pmod 2,\\
0 \pmod d \quad \text{if}\,\, n \equiv 0 \pmod 4,\\
0 \pmod 3 \quad \text{if}\,\, n \equiv 2 \pmod 4,
\end{cases}
\end{equation*}
which proves that $\lfloor \alpha^n \rfloor$ is composite for all $n\ge 2$. This proves part (b) of Theorem \ref{thmbnegative}.

A particular pair that fits within this class is $a=12, b=-5$. We have $\gcd(a^2+1,b)=5$ and $\alpha=6+\sqrt{41}\approx 12.403\ldots$.
Since $\lfloor \alpha \rfloor=12$ it follows that $\lfloor(6+\sqrt{41})^n\rfloor$ is composite for all $n\ge 1$.

The other two statements can be proved in very similar manner.
For part (c), since $gcd(a^4+a^2+1,b)=d>1$ it follows that $a^6\equiv 1 \pmod d$ from which $U_{n}\equiv 1 \pmod d$ whenever $n\equiv 0 \pmod 6$.
On the other hand, equality \eqref{p2} implies that  $U_n\equiv a \equiv 1 \pmod 2$ for all $n\equiv 1,2 \pmod 3$. So in this case,
\begin{equation*}
\lfloor \alpha^n \rfloor \equiv
\begin{cases}
0 \pmod a \quad \text{if}\,\, n \equiv 1 \pmod 2,\\
0 \pmod d \quad \text{if}\,\, n \equiv 0 \pmod 6,\\
0 \pmod 2 \quad \text{if}\,\, n \equiv 1,2 \pmod 3,
\end{cases}
\end{equation*}
which shows that $\lfloor \alpha^n \rfloor$ is composite for all $n\ge 2$.

An illustration for this case is obtained if taking $a=9, b=-7$. Then $gcd(a^4+a^2+1,b)=7$ and $\alpha=(9 + \sqrt{109})/2\approx 9.720\ldots$.
Again, since $\lfloor \alpha \rfloor=9$ it follows that $\lfloor((9 + \sqrt{107})/2)^n\rfloor$ is composite for all $n\ge 1$.

Finally, for part (d) note that the condition  $gcd(a^4-a^2+1,b)=d>1$ implies that $a^6\equiv -1 \pmod d$ from which $U_{n}\equiv 1 \pmod d$ whenever $n\equiv 0 \pmod {12}$. Since $a\equiv b \equiv 1\pmod 2$, equation \eqref{p2} implies that $U_n\equiv 1 \pmod {2}$ when $n\equiv 1, 2 \pmod 3$, while the fact that $a\equiv 0\pmod 3$ and $b\equiv 1 \pmod 3$ in conjunction with \eqref{p3} imply that  $U_n\equiv 1 \pmod {3}$ when $n\equiv 2 \pmod 4$.
So in this case we have
\begin{equation*}
\lfloor \alpha^n \rfloor \equiv
\begin{cases}
0 \pmod a \quad \text{if}\,\, n \equiv 1 \pmod 2,\\
0 \pmod d \quad \text{if}\,\, n \equiv 0 \pmod {12},\\
0 \pmod 2 \quad \text{if}\,\, n \equiv 1, 2 \pmod 3,\\
0 \pmod 3 \quad \text{if}\,\, n \equiv 2 \pmod 4,
\end{cases}
\end{equation*}
and since every positive integer $n$ falls within at least one of the congruence classes shown above it follows that $ \lfloor \alpha^n \rfloor$ is composite for all $n\ge 2$.

As an illustration of this case we can choose $a=171, b=-143$. Then, $\gcd(a^4-a^2+1,b)=13$ and $\alpha=(171+\sqrt{29813})/2\approx 171.832\ldots$. As in the previous cases, since $\lfloor \alpha \rfloor=171$ it follows that $\lfloor((171 + \sqrt{29813})/2)^n\rfloor$ is composite for all $n\ge 1$.

\end{proof}

 Next we address the situation when $b>0$.
\begin{theorem}\label{thmbpositive}
Let $a$ and $b$ be integers so that $1\le b \le a-2$. Then, the quadratic polynomial $P(z)=z^2-az +b$ is a Pisot polynomial, whose smaller root
$\beta = (a-\sqrt{a^2-4b})/2$ lies in the interval $(0,1)$ and whose larger root $\alpha = (a+\sqrt{a^2-4b})/2$ is strictly greater than $1$.  Moreover, if any of the conditions below is satisfied then $\lfloor \alpha^n \rfloor$ is composite for all $n\ge 2$.

(a) $\gcd(a-1,b)=d>1$.

(b) $\gcd(a^2+a+1,b)=d>1$, $a\equiv 1 \pmod 2$, and $b\equiv 1 \pmod 2$.

(c) $\gcd(a^2-a+1,b)=d>1$, $a\equiv 1 \pmod 6$, and $b\equiv 1 \pmod 6$.

(d) $\gcd(a+1,b)=d_1>1$, $\gcd(a^2+a+1,b)=d_2>1$, and $\gcd(a-1,b-1)=d_3>1$.
\end{theorem}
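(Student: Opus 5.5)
Since $\beta\in(0,1)$, we have $\lfloor\alpha^n\rfloor=U_n-1$ for \emph{every} $n\ge1$, where $U_n$ is the Lucas sequence of the second kind from Lemma~\ref{lemmaLucas}. The plan is therefore to show that for each $n\ge2$ some divisor $m>1$ (depending on the residue of $n$) satisfies $U_n\equiv1\pmod m$. I would use the same covering-congruence strategy as in the proof of Theorem~\ref{thmbnegative}.

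\textbf{Pisot property and size bound.} From $P(0)=b>0$ and $P(1)=1-a+b\le-1<0$, together with $P(-1)=1+a+b>0$, both roots are real, with $\beta\in(0,1)$ and $\alpha>1$. For compositeness I also need $U_n-1$ to be strictly larger than the divisor used. Every divisor below is either at most $b<a$, or equal to $2$ or $3$. Since $\{U_n\}$ is increasing, it suffices to bound $U_2$:
\[
U_2-1=a^2-2b-1\ge a^2-2a+3>a .
\]
This gives $U_n-1>a\ge 3$ for all $n\ge2$.

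\textbf{Divisibility by $d$ via Lemma~\ref{lemmaLucas}(b).} Since $d\mid b$, we have $U_n\equiv a^n\pmod d$.
In (a), $a\equiv1\pmod d$, so $U_n-1\equiv0\pmod d$ for all $n$, and (a) is done.
In (b), $d\mid a^2+a+1$ gives $a^3\equiv1\pmod d$, which handles $n\equiv0\pmod 3$.
In (c), $d\mid a^2-a+1$, which divides $a^3+1$, so $a^6\equiv1\pmod d$; this handles $n\equiv0\pmod 6$.
In (d), the congruence $a\equiv-1\pmod{d_1}$ handles all even $n$, and $a^3\equiv1\pmod{d_2}$ handles $n\equiv0\pmod3$.

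\textbf{Remaining residues via small primes and the periodic tables.}
In (b) and (c), $a$ and $b$ are both odd, so \eqref{p2} gives $U_n\equiv1\pmod2$ for $n\equiv1,2\pmod3$. Hence $\lfloor\alpha^n\rfloor$ is even for those $n$. This finishes (b).

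For (c), the only residue class left is $n\equiv3\pmod6$, i.e.\ odd $n$. With $a\equiv b\equiv1\pmod3$, the table \eqref{p3} gives $U_n\equiv a\equiv1\pmod 3$ for $n\equiv1,3\pmod8$. It also gives $U_n\equiv 2ab^2+ab+a\equiv 4\equiv1\pmod 3$ for $n\equiv5,7\pmod8$. So every odd $n$ yields $3\mid\lfloor\alpha^n\rfloor$.

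For (d), the remaining classes are $n\equiv1,5\pmod6$. Reducing the recurrence modulo $d_3$, where $a\equiv b\equiv1$, gives $U_n\equiv U_{n-1}-U_{n-2}$. Starting from $U_0=2$ and $U_1=1$, the sequence runs $2,1,-1,-2,-1,1,\dots$ with period $6$. Hence $U_n\equiv1\pmod{d_3}$ exactly when $n\equiv1,5\pmod6$, and every $n$ is covered.

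I expect the main obstacle to be bookkeeping rather than ideas. The two points that need care are verifying the period-$6$ computation modulo $d_3$ in (d), and checking that the covering classes really exhaust all residues modulo $6$ (respectively $3$). I would also double-check the size inequality once more, so that $n=2$ is genuinely included in each case.
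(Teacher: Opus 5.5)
Your proof is correct and follows essentially the same covering-congruence argument as the paper: Lemma~\ref{lemmaLucas}(b) handles the moduli $d,d_1,d_2$, the tables \eqref{p2} and \eqref{p3} handle the moduli $2$ and $3$, and the period-$6$ pattern $2,1,-1,-2,-1,1$ handles $d_3$; you are in fact more careful than the written proof in checking that $U_n-1>a$ exceeds the divisor for $n\ge 2$. One cosmetic point: in (d) you should say $U_n\equiv 1\pmod{d_3}$ \emph{whenever} $n\equiv 1,5\pmod 6$ rather than ``exactly when'' (for $d_3\in\{2,3\}$ further indices also satisfy the congruence), but only that direction is used, so nothing breaks.
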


\begin{proof}

First, note that since $P(0)=b>0$ and $P(2)=4-2a+b<0$ it follows that $0<\beta<1$ and $\alpha>2$.
Also, since $\beta>0$ it follows that $\lfloor \alpha^n\rfloor = U_n-1$ for all $n\ge 1$.
The proof follows the same lines as that of Theorem \ref{thmbnegative}.

In case (a), since $a\equiv 1 \pmod d$ and $b\equiv 0 \pmod d$ it follows from Lemma \ref{lemmaLucas} (b) that $U_n\equiv a^n\equiv 1 \pmod d$ for all $n\ge 1$ and therefore $\lfloor \alpha^n \rfloor\equiv 0 \pmod d$ which implies that  $\lfloor \alpha^n \rfloor$ is composite for all $n\ge 2$.
One such example is given by $a=5, b=2$. Then $gcd(a-1,b)=2$ and $\alpha=(5+\sqrt{17})/2\approx 4.561\ldots$. Since, $\lfloor \alpha \rfloor=4$ it follows that $\lfloor \alpha^n \rfloor\equiv 0 \pmod 2$ and therefore composite for all $n\ge 1$.

In case (b), since $a\equiv 1\pmod 2$ it follows from \eqref{p2} that $U_n\equiv 1 \pmod 2$ for all $n\equiv 1,2 \pmod 3$. On the other hand, since $a^2-a+1\equiv 0 \pmod d$ we have that $a^3\equiv 1\pmod d$ which after using Lemma \ref{lemmaLucas} (b) gives that $U_n=\equiv 1 \pmod d$ for all $n\equiv 0 \pmod 3$. In summary,
\begin{equation*}
\lfloor \alpha^n \rfloor \equiv
\begin{cases}
0 \pmod 2 \quad \text{if}\,\, n \equiv 1,2 \pmod 3,\\
0 \pmod d \quad \text{if}\,\, n \equiv 0 \pmod 3,
\end{cases}
\end{equation*}
from which the conclusion follows.
One particular instance of this scenario is obtained when $a=9, b=7$. We have $\gcd(a^2+a+1,b)=7$ and $\alpha=(9+\sqrt{53})/2\approx 8.140\ldots$. Since $\lfloor \alpha \rfloor =8$ it follows that $\lfloor \alpha^n \rfloor$ is composite for all $n\ge 1$.
Another example is given by $a=9, b=7$. In this case, $\gcd(a^2+a+1,b)=7$ and $\alpha=(9+\sqrt{53})/2\approx 8.140\ldots$. Again, $\lfloor \alpha^n \rfloor$ is composite for all $n\ge 1$.

In case (c), the choices of $a$ and $b$ modulo $6$ in conjunction with \eqref{p2} and \eqref{p3} imply that $U_n\equiv 1 \pmod 2$ for all $n\equiv 1,2 \pmod 3$ and $U_n\equiv 1 \pmod 2$ for all $n\equiv 1,3,5,7 \pmod 8$. Also, since $a^2-a+1\equiv 0 \pmod d$ we have that $a^6\equiv 1\pmod d$ which after using Lemma \ref{lemmaLucas} (b) gives $U_n\equiv 1 \pmod d$ for all $n\equiv 0 \pmod 6$. Hence, in this case
\begin{equation*}
\lfloor \alpha^n \rfloor \equiv
\begin{cases}
0 \pmod 2 \quad \text{if}\,\, n \equiv 1,2 \pmod 3,\\
0 \pmod 3 \quad \text{if}\,\, n \equiv 1 \pmod 2,\\
0 \pmod d \quad \text{if}\,\, n \equiv 1 \pmod 6.
\end{cases}
\end{equation*}
One particular numerical example is obtained when choosing $a=19, b=7$.

We have $\gcd(a^2-a+1,b)=7$ and $\alpha=(19+3\sqrt{37})/2\approx 18.624\ldots$. Since $\lfloor \alpha \rfloor =18$ it follows that $\lfloor \alpha^n \rfloor$ is composite for all $n\ge 1$.

For case (d) we have $a^2\equiv 1 \pmod {d_1}$ and $b\equiv 0 \pmod {d_1}$ which after using Lemma \ref{lemmaLucas} (b) implies that $U_n\equiv 1 \pmod {d_1}$ for all $n\equiv 0 \pmod 2$.
Similarly, we have $a^3\equiv 1 \pmod {d_2}$ and $b\equiv 0 \pmod {d_2}$ which after using Lemma \ref{lemmaLucas} (b) again gives that $U_n\equiv 1 \pmod {d_2}$ for all $n\equiv 0 \pmod 3$.
Finally, since $a\equiv b\equiv 1\pmod {d_3}$ it is easy to check that the behavior of the sequence $\{U_n\}_{n\ge 1}$ modulo $d_3$ is as shown below
\begin{equation*}
U_n \pmod {d_3}: 1, -1, -2, -1, 1, 2, 1, -1, -2,  -1, 1, 2, 1, -1 \ldots
\end{equation*}
In particular, $U_n\equiv 1 \pmod {d_3}$ when $n\equiv 1, 5 \pmod 6$. Putting all of these together
\begin{equation*}
\lfloor \alpha^n \rfloor \equiv
\begin{cases}
0 \pmod {d_1} \quad \text{if}\,\, n \equiv 0 \pmod 2,\\
0 \pmod {d_2} \quad \text{if}\,\, n \equiv 0 \pmod 3,\\
0 \pmod {d_3} \quad \text{if}\,\, n \equiv 1,5 \pmod 6.
\end{cases}
\end{equation*}
As before, for every $n$, $\lfloor \alpha ^n \rfloor$ is divisible by at least one of the values $d_1$, $d_2$, or $d_3$.
As a specific example consider $a=86, b=21$. Then $d_1=3, d_2=7$, $d_3=5$ and $\alpha=43+2\sqrt{457}\approx 85.755\ldots$.
So, $\lfloor (43+ 2\sqrt{457})^n\rfloor$ is always composite. Note that this example does not fall into any of the previous three cases.

\end{proof}

In both Theorem \ref{thmbnegative} and Theorem \ref{thmbpositive} a necessary condition for ensuring $\lfloor \alpha^n\rfloor$ is always composite is that $|b|\ge 2$. It is reasonable to inquire whether such examples are possible if $b=1$ or $b=-1$. The answer is affirmative and it follows from a result of Cass \cite{cass}. We include the simple proof for the sake of completeness.

\begin{theorem}(\cite{cass})\label{cass}

(a) Let $\alpha$ be the larger root of the polynomial $z^2-az+1$ where $a\ge 3$ and let $h>1$ so that $\gcd(h,6)=1$. Then, $\lfloor (\alpha^h)^n\rfloor$ is composite for all $n\ge 2$.

(b) Let $\alpha$ be the larger root of the polynomial $z^2-az- 1$ where $a\ge 2$ and let $h>1$ so that $\gcd(h,6)=1$. Then, $\lfloor (\alpha^h)^n\rfloor$ is composite for all $n\ge 3$.
\end{theorem}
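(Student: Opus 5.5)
The plan is to pass from $\alpha$ to $\gamma=\alpha^h$ and to use two polynomial divisibilities that hold because $\gcd(h,6)=1$. Let $\beta$ be the conjugate of $\alpha$ and $U_m=\alpha^m+\beta^m$, the Lucas sequence of the second kind from Lemma \ref{lemmaLucas} with $b=\pm1$. Then $\gamma^n+\beta^{hn}=U_{hn}$, and $|\beta^{hn}|<1$. So, exactly as in the proofs of Theorems \ref{thmbnegative} and \ref{thmbpositive}, $\lfloor \gamma^n\rfloor$ equals $U_{hn}-1$ when $\beta^{hn}>0$ and $U_{hn}$ when $\beta^{hn}<0$. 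The whole argument then consists of exhibiting a proper divisor of $U_{hn}$ or of $U_{hn}-1$.

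\textbf{Key identity.} Since $h\equiv \pm1 \pmod 6$, the map $\zeta\mapsto\zeta^h$ permutes the two primitive sixth roots of unity. Hence $z^2-z+1$ divides $z^{2h}-z^h+1$ in $\mathbb{Z}[z]$, say with quotient $Q(z)$. Both polynomials are self-reciprocal, so $Q$ is self-reciprocal of degree $2h-2$ with integer coefficients. Homogenizing with $x=\alpha^n$, $y=\beta^n$ under the assumption $xy=1$ gives
\begin{equation*}
U_{hn}-1=x^h+y^h-(xy)^{h-1}\cdot 1=(x+y-1)\cdot R(x,y),
\end{equation*}
where $R$ is a symmetric form with integer coefficients. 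Therefore $R(x,y)$ is an integer polynomial in $x+y=U_n$ and $xy=1$, so it is a rational integer. This yields the divisibility $(U_n-1)\mid (U_{hn}-1)$ whenever $(\alpha\beta)^n=1$.

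Similarly, for $h$ odd, $z+1$ divides $z^h+1$. Homogenizing gives $(U_n)\mid U_{hn}$ for every $n$, with an integer quotient for the same symmetric-function reason. The one point that needs care is checking that these quotients are genuine integers rather than merely algebraic integers. I expect this to be the main (mild) obstacle, and the self-reciprocity and symmetric-function argument above settles it.

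\textbf{Case (a).} Here $\alpha\beta=1$ and $\beta\in(0,1)$, so $\lfloor\gamma^n\rfloor=U_{hn}-1$ for all $n$. By the identity, $U_n-1$ divides it. Moreover $U_n-1\ge U_1-1=a-1\ge 2$, and $U_n-1<U_{hn}-1$ because $\{U_m\}$ is strictly increasing. This gives a proper divisor, so $\lfloor\gamma^n\rfloor$ is composite.

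\textbf{Case (b).} Here $\alpha\beta=-1$ and $\beta\in(-1,0)$. Since $h$ is odd, $\beta^{hn}$ has the sign of $(-1)^n$.
\begin{itemize}
\item For odd $n$ we have $\lfloor\gamma^n\rfloor=U_{hn}$, which is divisible by $U_n$. For $n\ge3$ the divisor satisfies $2\le U_3=a^3+3a<U_{hn}$, so it is proper.
\item For even $n$ we have $(\alpha\beta)^n=1$, and $\lfloor\gamma^n\rfloor=U_{hn}-1$. The case (a) identity applies with $x=\alpha^n$, $y=\beta^n$, giving the divisor $U_n-1\ge U_2-1=a^2+1\ge 5$, which again is smaller than $U_{hn}-1$.
\end{itemize}
The value $n=1$ is excluded in part (b) only because $U_1=a$ may fail to be a proper divisor; this explains the hypothesis $n\ge 3$.
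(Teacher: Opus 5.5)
Your proof is correct and follows the paper's structure. The key fact in both is $(U_n-1)\mid(U_{hn}-1)$ when $(\alpha\beta)^n=1$: the paper gets it by applying the period-$6$ pattern $1,-1,-2,-1,1,2$ of a Lucas sequence modulo $U_1-1$ to the sequence with root $\alpha^n$, while you get it from $z^2-z+1\mid z^{2h}-z^h+1$ plus self-reciprocity. Part (b) is the same odd/even split, with even $n$ handled exactly as the paper's passage to $\alpha^2$.

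One correction to your closing remark: for $n=1$ in part (b), the divisor $U_1=a$ of $\lfloor\gamma\rfloor=U_h$ satisfies $2\le a<U_h$, so it is proper. Your argument therefore gives compositeness for every $n\ge 1$; the hypothesis $n\ge 3$ is simply not sharp, rather than forced by $n=1$.
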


\begin{proof}
In case (a) we have $\alpha=(a+\sqrt{a^2-4})/2>2$ and $\beta=(a-\sqrt{a^2-4})/2 \in (0,1)$.
Let $U_n$ be the Lucas sequence of the second kind given by $U_0=2, U_1=a, U_n=aU_{n-1}-U_{n-2}$ for $n\ge 2$. Since $U_n=\alpha^n+\beta^n$ and $\beta \in (0,1)$ then $\lfloor \alpha^n\rfloor =U_n-1$ for all $n\ge 1$.

Computing the remainders of $U_n$ modulo $U_1-1=a-1$ for $n=1, 2, 3, \ldots$ we obtain that
\begin{equation*}
U_n \pmod{U_1-1} = 1, -1, -2, -1, 1, 2, 1, -1, -2, -1, 1, 2, \ldots
\end{equation*}
which implies that $U_n\equiv 1 \pmod {U_1-1}$ whenever $n\equiv 1, 5 \pmod 6$. In particular, it follows that $\lfloor \alpha^n\rfloor \equiv 0 \pmod{a-1}$ whenever $\gcd(n, 6)=1$.
Let $\{W_n\}_n$ be the Lucas sequence of the second kind given by $W_n=(\alpha^h)^n+(\beta^h)^n=U_{nh}$, $n\ge 0$. Applying the result above to this new sequence we obtain
\begin{equation*}
U_{nh}=W_n=(\alpha^n)^h+(\beta^n)^h\equiv 1 \pmod {\alpha^n+\beta^n-1}\equiv 1 \pmod {U_n-1},
\end{equation*}
which implies that $\lfloor \alpha^{nh}\rfloor \equiv 0 \pmod {U_n-1}$ for all $n\ge 1$ and therefore certainly composite for all $n\ge 2$.
The smallest example of this type is obtained if taking $a=3$ and $h=5$. Then $\alpha^h =((3+\sqrt{5})/2)^5=(123+55\sqrt{5})/2=122.991\ldots$.
It follows that $\lfloor((123+55\sqrt{5})/2)^n\rfloor$ is composite for all $n\ge 1$.

We next address part (b). In this case  $\alpha=(a+\sqrt{a^2+4})/2>2$ and $\beta=(a-\sqrt{a^2+4})/2 \in (-1,0)$. Let $U_n=\alpha^n+\beta^n$ as before.

If $n$ is odd then since $\beta<0$ we have $\lfloor\alpha^{nh}\rfloor=U_{nh}$ and by using Lemma \ref{lemmaLucas} (a) it follows that
$\lfloor\alpha^{nh}\rfloor\equiv 0 \pmod{a}$ and therefore composite for all odd $n\ge 3$.

On the other hand, $\alpha^2$ is the root of the quadratic $z^2-(a^2+2)z+1=0$. By applying the result from part (a) to $\alpha^2$ instead of $\alpha$ it follows that $\lfloor\alpha^{nh}\rfloor$ is composite for all even $n\ge 4$.

As a numerical illustration one can take $a=2$ and $h=5$. It follows that $\alpha^h=(1+\sqrt{2})^5=41+29\sqrt{5}=82.012\ldots$. Since $\lfloor \alpha^5 \rfloor=82$ and $\lfloor \alpha^{10} \rfloor=6725$ it follows that $\lfloor (41+29\sqrt{5})^n\rfloor$ is composite for all $n\ge 1$.
\end{proof}

\section{Cubic Pisot numbers and units in pure cubic fields}\label{sectioncubic}

\begin{theorem}\label{thmcubic}
Let $a, b$ integers such that $b\ge 0$ and $a\ge 2b+5$. Consider the polynomial
\begin{equation*}
P(z)=z^3-az^2+bz-1.
\end{equation*}
Then $P(z)$ is a Pisot polynomial whose largest root is no smaller than $5$. Moreover, if $a\equiv 3 \pmod 6$ and $b\equiv 3 \pmod 6$ then $\lfloor \alpha^n\rfloor$ is composite for all $n\ge 1$.
\end{theorem}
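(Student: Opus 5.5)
The plan has three parts: prove the Pisot property with a Rouché-type estimate, show that $\lfloor \alpha^n\rfloor$ is always either $S_n$ or $S_n-1$ for the power sums $S_n$, and then get divisibility of $S_n$ by $3$ and of $S_n-1$ by $2$ from the congruence conditions.

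\emph{Pisot property.} For $|z|=1$ we have $|az^2|=a$ and $|z^3+bz-1|\le b+2<a$, since $a\ge 2b+5$. By Rouché's theorem $P$ has exactly two roots $\beta,\gamma$ in the open unit disc, so the third root has modulus greater than $1$. This third root is real: it is the only root outside the disc, and complex roots come in conjugate pairs. Call it $\alpha$. We have $P(5)=124-25a+5b\le -1-45b<0$ and $P(z)\to\infty$ as $z\to\infty$, so $\alpha>5$. We also have $P(a)=ab-1$ and $P(a-1)=-(a-1)^2+b(a-1)-1<0$. When $b\ge 1$ this gives $P(a)>0$, so $\alpha\in(a-1,a)$. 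Hence $s:=\beta+\gamma=a-\alpha\in(0,1)$ and $\beta\gamma=1/\alpha\in(0,1/5)$. (When $b=0$ we have $P(a)=-1<0$, so instead $\alpha$ lies just above $a$ and $s\in(-1,0)$; I expect this to be handled in the same way.)

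\emph{Locating $\lfloor\alpha^n\rfloor$.} Let $S_n=\alpha^n+\beta^n+\gamma^n$. This is an integer sequence with $S_0=3$, $S_1=a$, $S_2=a^2-2b$ and $S_n=aS_{n-1}-bS_{n-2}+S_{n-3}$. I claim $|\beta^n+\gamma^n|<1$ for all $n\ge1$, and I consider two cases.
\begin{itemize}
\item If $\beta,\gamma$ are non-real, they are conjugates with $|\beta|=|\gamma|=\alpha^{-1/2}$. Then $|\beta^n+\gamma^n|\le 2/\sqrt5<1$.
\item If $\beta,\gamma$ are real, their product is positive, so they have the same sign, namely the sign of $s$. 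Then $|\beta|+|\gamma|=|s|<1$, and therefore $|\beta^n+\gamma^n|\le(|\beta|+|\gamma|)^n<1$.
\end{itemize}
Consequently $\lfloor\alpha^n\rfloor=S_n$ when $\beta^n+\gamma^n\le 0$, and $\lfloor\alpha^n\rfloor=S_n-1$ when $\beta^n+\gamma^n\in(0,1)$. This is the step I expect to be the main obstacle, because it is where the numerical hypotheses have to be used carefully. In the real case the key point is that the small roots share a sign; I would also double-check the borderline $b=0$ case separately.

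\emph{Congruences.} First take the modulus $3$. Since $a\equiv b\equiv 0\pmod 3$, the recurrence reduces to $S_n\equiv S_{n-3}\pmod 3$. The initial values satisfy $S_0=3\equiv0$, $S_1=a\equiv0$ and $S_2=a^2-2b\equiv0\pmod 3$, so $3\mid S_n$ for every $n$.

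Next take the modulus $2$. Since $a\equiv b\equiv1\pmod 2$, the recurrence becomes $S_n\equiv S_{n-1}+S_{n-2}+S_{n-3}\pmod 2$. The initial values $S_0,S_1,S_2$ are all odd, so an easy induction gives that $S_n$ is odd for all $n$, and hence $2\mid S_n-1$.

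\emph{Conclusion.} Every $\lfloor\alpha^n\rfloor$ is either $S_n$, a multiple of $3$, or $S_n-1$, an even number. Moreover $\lfloor\alpha^n\rfloor\ge\lfloor\alpha\rfloor\ge5$, so in either case it is a proper multiple of $3$ or of $2$. Therefore $\lfloor\alpha^n\rfloor$ is composite for every $n\ge1$.
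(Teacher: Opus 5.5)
Your proof is correct. Its skeleton matches the paper's: Rouché, then $\lfloor\alpha^n\rfloor\in\{S_n,S_n-1\}$, then $S_n\equiv 3\pmod 6$. The difference is in how you control the small roots.

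The paper applies Rouché on the circle $|z|=1/2$. That is exactly where the hypothesis $a\ge 2b+5$ is used, since it needs $a/4>1/8+b/2+1$. This gives $|\beta|,|\gamma|<1/2$ in one step, so $|\beta^n+\gamma^n|<2^{1-n}\le 1$ with no case distinction and uniformly for all $b\ge 0$.

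You instead apply Rouché on $|z|=1$, which only needs $a>b+2$. You then recover the bound from $\alpha\beta\gamma=1$. In the conjugate case $|\beta|=\alpha^{-1/2}$. In the real case, $\beta$ and $\gamma$ share a sign, and the localization $\alpha\in(a-1,a)$ from $P(a-1)<0<P(a)$ gives $|\beta|+|\gamma|=a-\alpha<1$.

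Your route is longer, but it proves more. Under the congruence conditions, $a>b+2$ already forces $a\ge b+6$ and $\alpha>a-1\ge 8$. So your proof gives compositeness for every $a\equiv b\equiv 3\pmod 6$ with $a\ge b+6$, for example $a=9$, $b=3$, which the stated theorem excludes.

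Your worry about $b=0$ does not arise. Your Pisot argument never uses $b\ge 1$, and the compositeness part assumes $b\equiv 3\pmod 6$, so $b\ge 3$ there.
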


\begin{proof}
Start by computing $P(5)=125-25a+5b-1\le 125-25(2b+5)+5b-1=-45b-1<0$. So, $P$ must have a root $\alpha>5$.
Next, note that
\begin{equation*}
|az^2|>|z^3+bz-1| \quad \text{on the circle}\quad |z|=1/2.
\end{equation*}
Indeed, it would suffice to show that
\begin{equation*}
\frac{a}{4}>\frac{1}{8}+\frac{b}{2}+1\,\, \text{which is equivalent to}\,\, a>2b+4.5, \text{true by our choice of }\, a.
\end{equation*}
Using Rouch\'{e}'s theorem, it follows that the other two roots of $P$, $\beta$ and $\gamma$ satisfy $|\beta|<1/2$ and $|\gamma|<1/2$.
Hence, $P$ is a Pisot polynomial, as claimed.
Define, $U_n=\alpha^n+\beta^n+\gamma^n$ for all $n\ge 0$. Then $U_0=3, U_1=\alpha+\beta+\gamma=a, U_2=\alpha^2+\beta^2+\gamma^2=a^2-2b$, and
$U_n=aU_{n-1}-bU_{n-2}+U_{n-3}$ for all $n\ge 3$. If if $a\equiv 3 \pmod 6$ and $b\equiv 3 \pmod 6$ then via a quick induction one can easily prove that
$U_n\equiv 3 \pmod 6$ for all $n\ge 0$. Since $|\beta|<1/2$ and $|\gamma|<1/2$ it follows that $-1<\beta^n+\gamma^n<1$ for all $n\ge 1$ and therefore
$\lfloor \alpha^n \rfloor= U_n$  or $\lfloor \alpha^n \rfloor=U_n-1$ depending on whether $\beta^n+\gamma^n<0$ or $\beta^n+\gamma^n>0$.

It turns out that $\lfloor \alpha^n \rfloor\equiv 2, 3 \pmod 6$ and since $\alpha>5$ we conclude that  $\lfloor \alpha^n \rfloor$ is composite for all $n\ge 1$.
\end{proof}

Theorem \ref{thmcubic} has several immediate consequences as it allows us to test the fundamental unit of the pure cubic field $\mathbb{Q}(\sqrt[3]{m})$ for several values of $m$. From Dirichlet's Unit Theorem it follows that in any pure cubic field $\mathbb{Q}(\sqrt[3]{m})$, there exists a special unit, $\mu$, known as the fundamental unit, such that all other units $\rho$ are given by $\rho=\pm \mu^t$.
A table of fundamental units of $\mathbb{Q}(\sqrt[3]{m})$ for $2\le m\le 250$ is given in \cite{wada}.

For example, if $m=5$ then $\mu=41+24\sqrt[3]{5}+14\sqrt[3]{25}=122.975\ldots$. The minimal polynomial of $\mu$ is $z^3 - 123z^2 + 3z - 1$ whose coefficients satisfy the conditions of Theorem \ref{thmcubic}. Hence, $\gcd(\lfloor\mu^n\rfloor,6)>1$ for all $n\ge 1$.

A similar scenario takes place if $m=6$. In this case, $\mu= 109+60\sqrt[3]{6}+33\sqrt[3]{36}=326.990\ldots$ whose minimal polynomial is $z^3-327z^2+3z-1$ which again satisfies the requirements of Theorem \ref{thmcubic}. Hence, $\lfloor(109+60\sqrt[3]{6}+33\sqrt[3]{36})^n\rfloor$ is always composite as it is divisible by either $2$ or $3$ for every $n\ge 1$.

If $m=7$ then $\mu=4+2\sqrt[3]{7}+\sqrt[3]{49}=11.485\ldots$ and its minimal polynomial is $z^3-12z^2+6z-1$, which does not satisfy the hypotheses of Theorem \ref{thmcubic}. However, the minimal polynomial of $\mu^3$ is $z^3-1515z^2+3z-1$ and since this satisfies the requirements in Theorem \ref{thmcubic} we conclude that $\lfloor (4+2\sqrt[3]{7}+\sqrt[3]{49})^{3n}\rfloor$ is composite for all $n\ge 1$.

A similar situation happens when $m=3$. In this case $\mu=4+3\sqrt[3]{3}+2\sqrt[3]{9}=12.486\ldots$ satisfies the equation $z^3-12z^2-6z-1$, while $\mu^3$ has the minimal polynomial $z^3-1947z^2+3z-1$ which is of the type listed in Theorem \ref{thmcubic}. Hence, $\lfloor (4+3\sqrt[3]{3}+2\sqrt[3]{9})^{3n}\rfloor$ is composite for all $n\ge 1$.

One of the most interesting cases is $m=2$. We have $\mu=1+\sqrt[3]{2}+\sqrt[3]{4}=3.847\ldots$ whose minimal polynomial is $z^3-3x^2-3x-1$. One cannot directly apply Theorem \ref{thmcubic} and clearly $\lfloor \mu\rfloor=3$ which is a prime. On the other hand, the minimal polynomial of $\mu^2$ is $z^3-15z^2+3z-1$ for which Theorem \ref{thmcubic} can be applied. It follows that $\lfloor \mu^{2n}\rfloor$ is always divisible by either $2$ or $3$. This is exactly Huxley's result mentioned earlier. However, we can say more.

Let $\rho$ and $\overline{\rho}$ be the algebraic conjugates of $\mu$. We have $|\rho|=|\overline{\rho}|=\sqrt{2^{1/3}-1}=0.509\ldots$. Then clearly, $\rho^n+\overline{\rho}^n\in (-1,1)$ for all $n\ge 2$. Define $U_n=\mu^n+\rho^n+\overline{\rho}^n$. Since, $U_0=3, U_1=3, U_2=15$ and $U_n=3U_{n-1}+3U_{n-2}+U_{n-3}$ one can easily prove that $U_n\equiv 3\pmod 6$ and $U_n\ge 15$ for all $n\ge 2$. Since  $\lfloor\mu^n\rfloor=U_n$ or $\lfloor\mu^n\rfloor=U_n-1$ for all $n\ge 2$ it follows that $\lfloor(1+\sqrt[3]{2}+\sqrt[3]{4})^n \rfloor$ is composite for all $n\ge 2$.

It would be interesting to decide for which values of $m$ is it true that the sequence $\{\lfloor \mu^n\rfloor\}_{n\ge 1}$ is prime-free (or that is has at most finitely many prime terms). We present one such result below.

\begin{theorem}\label{cubiceven}
Let $m$ be a positive cube-free even integer, with $m\not\equiv \pm 1 \pmod 9$ and let $\mu>1$ be the fundamental unit in $\mathbb{Q}(\sqrt[3]{m})$. Then $\lfloor \mu^n \rfloor$ is composite for all $n\ge 4$.
\end{theorem}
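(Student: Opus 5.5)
The approach is to reduce to the mechanism already used for $m=2$ above: show that the trace $U_n=\mu^n+\rho^n+\overline{\rho}^n$ satisfies $U_n\equiv 3 \pmod 6$ for every $n\ge 1$, and that $|\rho^n+\overline{\rho}^n|<1$ for $n\ge 4$. Then $\lfloor \mu^n\rfloor\in\{U_n,U_n-1\}$ is $\equiv 3$ or $2 \pmod 6$. Since it is also larger than $3$, it is composite.

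\emph{Step 1: the arithmetic set-up.} Write $m=hk^2$ with $h,k$ squarefree and coprime, and put $\theta_1=\sqrt[3]{hk^2}$, $\theta_2=\sqrt[3]{h^2k}$. The hypothesis $m\not\equiv\pm1\pmod 9$ is exactly the classical condition under which the ring of integers of $\mathbb{Q}(\sqrt[3]{m})$ is $\mathbb{Z}[\theta_1,\theta_2]=\mathbb{Z}\oplus\mathbb{Z}\theta_1\oplus\mathbb{Z}\theta_2$. In this case there are no denominators of $3$. The multiplication rules are
\begin{equation*}
\theta_1^2=k\theta_2,\quad \theta_2^2=h\theta_1,\quad \theta_1\theta_2=hk,
\end{equation*}
so $\theta_1^3=hk^2$ and $\theta_2^3=h^2k$.

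\emph{Step 2: divisibility by $3$.} Write $\mu^n=x_n+y_n\theta_1+z_n\theta_2$ with $x_n,y_n,z_n\in\mathbb{Z}$. Since $\theta_1$ and $\theta_2$ have trace $0$, we get $U_n=\operatorname{Tr}(\mu^n)=3x_n$, so $3\mid U_n$.

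\emph{Step 3: parity.} Since $m$ is even, $hk$ is even. Hence the $\mathbb{Z}$-span $2\mathbb{Z}+\mathbb{Z}\theta_1+\mathbb{Z}\theta_2$ is an ideal of the ring: every product of basis elements other than $1\cdot 1$ lands in it, because $\theta_1\theta_2=hk$ is even. This gives a ring homomorphism onto $\mathbb{F}_2$ sending $x+y\theta_1+z\theta_2\mapsto x \bmod 2$, and therefore $x_n\equiv x_1^n\pmod 2$. Next, $\mu>1$ is real and its conjugates are complex with $\mu|\rho|^2>0$, so $N(\mu)=1$. Reducing $N(\mu)$ under the same homomorphism (it is a polynomial in $x_1,y_1,z_1$ whose only term without an even coefficient is $x_1^3$) gives $x_1^3\equiv 1\pmod 2$. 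So $x_n$ is odd and $U_n\equiv 3\pmod 6$ for all $n\ge1$.

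\emph{Step 4: size estimates.} From $N(\mu)=1$ we get $|\rho|=|\overline{\rho}|=\mu^{-1/2}$, so $|\rho^n+\overline{\rho}^n|\le 2\mu^{-n/2}$. This is $<1$ once $\mu^{n/2}>2$. For $n\ge4$ that is implied by $\mu>\sqrt2$. Now $\mu$ is a cubic Pisot number, and by Siegel's results the only Pisot numbers $\le\sqrt2$ are $\theta_1$ (whose field has discriminant $-23$, not a pure cubic field) and $\theta_2$ (degree $4$). Hence $\mu>\sqrt2$. Then $\lfloor\mu^n\rfloor\ge\lfloor 4\rfloor=4$, and $\lfloor\mu^n\rfloor$ is divisible by $2$ or by $3$ while exceeding $3$, so it is composite for all $n\ge4$.

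The step I expect to need the most care is Step 1 together with Step 3. One must correctly identify the integral basis when $m$ is not squarefree, and check that the parity homomorphism is well defined in that basis. The size bound in Step 4 is routine once $\mu>\sqrt2$ is secured.
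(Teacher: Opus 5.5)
Your proposal is correct and follows essentially the same route as the paper: Dedekind's integral basis $\{1,\sqrt[3]{hk^2},\sqrt[3]{h^2k}\}$, then norm $+1$ with $hk$ even forcing the rational coordinate of $\mu$ to be odd so that $U_n\equiv 3 \pmod 6$, and finally Siegel's bound $\mu>\sqrt2$ giving $|\rho^n+\overline{\rho}^n|<1$ and $\lfloor\mu^n\rfloor\ge 4$ for $n\ge 4$. The only cosmetic differences are these: you get $3\mid U_n$ from $U_n=3x_n$ and parity from the reduction homomorphism onto $\mathbb{F}_2$, rather than from $a\equiv b\equiv 3\pmod 6$ and the recurrence. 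You also justify $\mu\neq\theta_1,\theta_2$ explicitly, whereas the paper calls it obvious.
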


\begin{proof}
Start by writing $m$ as $m=hk^2$ where $h$ and $k$ are coprime square-free positive integers. Obviously, every cube-free $m$ has a unique such factorization.

An old theorem of Dedekind (see e. g. \cite{aw} Thoerem 7.3.2) states that if $m\not\equiv \pm 1\pmod 9$, then an integral basis in $\mathbb{Q}(\sqrt[3]{m})$ is $\{1, \sqrt[3]{hk^2},  \sqrt[3]{h^2k}\}$.

Hence, there exist integers $c_1,c_2,c_3$ such that
\begin{equation*}
\mu= c_1+c_2\sqrt[3]{hk^2}+c_3\sqrt[3]{h^2k}.
\end{equation*}
It is known that the algebraic conjugates of $\mu$ are
\begin{equation*}
\rho= c_1+c_2\omega\sqrt[3]{hk^2}+c_3\omega^2\sqrt[3]{h^2k} \,\, \text{and}\,\,\overline\rho= c_1+c_2\omega^2\sqrt[3]{hk^2}+c_3\omega\sqrt[3]{h^2k},
\end{equation*}
where
$\omega=(-1+i\sqrt{3})/2$.
Computing the elementary symmetric polynomials in $\mu$, $\rho$, $\overline\rho$ we obtain
\begin{align*}
a&=\mu+\rho+\overline\rho=3c_1,\\
b&=\mu\rho+\mu\overline\rho+\rho\overline\rho= 3c_1^2-3c_2c_3hk,\\
c&=\mu\rho\overline\rho=c_1^3+c_2^3hk^2+c_3^3h^2k-3c_1c_2c_3hk=1.
\end{align*}

Since $c=1$ and $hk\equiv 0 \pmod 2$ it follows that $c_1\equiv 1 \pmod 2$. Note that this immediately implies that $a\equiv 3 \pmod 6$ and $b\equiv 3 \pmod 6$. Define $U_n=\mu^n+\rho^n+\overline\rho^n$; as before $U_0=3$, $U_1=a$, $U_2=a^2-2b$, and $U_n=aU_{n-1}-bU_{n-2}+U_{n-3}$ which leads to $U_n\equiv 3\pmod 6$ for all $n\ge 0$.

Salem \cite{salem} conjectured and shortly after Siegel \cite{siegel} proved that the minimal Pisot number is the real root of $z^3-z-1=0$. This number is usually denoted by $\theta_1$ and it is known as the plastic number.
\begin{equation*}
\theta_1=\sqrt[3]{\frac{9+\sqrt{69}}{18}}+\sqrt[3]{\frac{9-\sqrt{69}}{18}}=1.3247\ldots.
\end{equation*}

Also, Siegel \cite{siegel} proved that the second smallest Pisot number is the positive root of $z^4-z^3-1=0$, $\theta_2=1.380\ldots$, and that all other Pisot numbers are $>\sqrt{2}$.
Since in our case it is obvious that $\mu\neq \theta_1, \theta_2$ we obtain that
\begin{equation*}
|\rho|^2=\rho\overline\rho=\frac{1}{\mu}\le \frac{1}{\sqrt{2}} \,\, \text{from which} \,\, |\rho|^4 <\frac{1}{2}.
\end{equation*}
So, for every $n\ge 4$ we have $-1<\rho^n+\overline\rho^n<1$ from which $\lfloor\mu^n\rfloor=U_n$ or $\lfloor\mu^n\rfloor=U_n-1$ for all such $n$.
Moreover, we have $\mu^4> (\sqrt{2})^4= 4.$
Since $U_n\equiv 3 \pmod 6$ and $\lfloor \mu^n \rfloor\ge 4$ it follows that $\lfloor \mu^n \rfloor$ is composite for all $n\ge 4$, as claimed.
\end{proof}

Note that the conclusion of Theorem \ref{cubiceven} may be true even when $m$ is odd or $m\equiv \pm 1 \mod 9$.
We have already seen that in $\mathbb{Q}(\sqrt[3]{5})$ the fundamental unit $\mu=41+24\sqrt[3]{5}+14\sqrt[3]{25}$ has minimal polynomial $z^3-123z^2+3z-1$  so Theorem \ref{thmcubic} applies.
Similar situations hold in $\mathbb{Q}(\sqrt[3]{11})$ and  $\mathbb{Q}(\sqrt[3]{15})$ where  the fundamental units are $\mu= 89+40\sqrt[3]{11}+18\sqrt[3]{121}$ and $\mu=5401+2190\sqrt[3]{15}+888\sqrt[3]{225}$ with the minimal polynomials being $z^3-267z^2+3z-1$ and $z^3-16203z^2+3z-1$, respectively. Again, one can use Theorem \ref{thmcubic} in each of these cases.

The situation seems to be more complicated when $m\equiv \pm 1 \pmod 9$ as in general the expression of the integral basis of $\mathbb{Q}(\sqrt[3]{m})$ is rather convoluted -see e. g. Dedekind's result, Theorem 7.3.2 in \cite{aw}.
However, if $m=26$ things look quite tame. The fundamental unit is $\mu= 9+3\sqrt[3]{26}+\sqrt[3]{676}$ and its minimal polynomial is $z^3-27x^2+9z-1$. One can certainly use Theorem \ref{thmcubic} is this case.

Based on our admittedly limited verifications, we venture the following.
\begin{conjecture}
Let $m$ be a cubefree positive integer $m>1$ and let $\mu>1$ be the fundamental unit in $\mathbb{Q}(\sqrt[3]{m})$. Then, for every $n$ large enough $gcd(\lfloor \mu^{6n}\rfloor,6)>1$.
\end{conjecture}

\section{Biquadratic Pisot numbers}\label{sectionbiquadratic}

Let $p$ and $q$ be coprime squarefree integers greater than $1$. The field formed by adjoining $\sqrt{p}$ and $\sqrt{q}$ to $\mathbb{Q}$ is denoted $\mathbb{Q}(\sqrt{p},\sqrt{q})$. Since $\mathbb{Q}(\sqrt{p},\sqrt{q}) = \mathbb{Q}(\sqrt{p}+\sqrt{q})$  and the minimal polynomial of $\sqrt{p}+\sqrt{q}$
is $z^4- 2(p+q)z^2+(p-q)^2$, $\mathbb{Q}(\sqrt{p},\sqrt{q})$ is a biquadratic field over $\mathbb{Q}$.

The elements of $\mathbb{Q}(\sqrt{p},\sqrt{q})$ are numbers of the form $r_1+r_2\sqrt{p}+r_3\sqrt{q}+r_4\sqrt{pq}$, where $r_1,r_2,r_3,r_4 \in \mathbb{Q}$.
Any element of $\mathbb{Q}(\sqrt{p},\sqrt{q})$ which satisfies a monic equation with rational integer coefficients os called an integer in $\mathbb{Q}(\sqrt{p},\sqrt{q})$.

Williams \cite{williams} proved that the integers in $\mathbb{Q}(\sqrt{p},\sqrt{q})$ are necessarily of the form
\begin{equation*}
\frac{1}{4}\left(c_1+c_2\sqrt{p}+c_3\sqrt{q}+c_4\sqrt{p}\sqrt{q}\right)
\end{equation*}
where $c_1,c_2,c_3,c_4$ are rational integers which satisfy additional equalities modulo $4$ depending on the values of $p$ and $q$ modulo $4$.

While we are not aware of a simple criterion for generating biquadratic Pisot numbers, it is rather straightforward to perform a computer search due to the following result.

\begin{lemma}\label{lemmabiquadratic}
Let $p$ and $q$ be coprime squarefree integers greater than $1$ and let
\begin{equation*}
\alpha=\frac{1}{4}\left(c_1+c_2\sqrt{p}+c_3\sqrt{q}+c_4\sqrt{p}\sqrt{q}\right)
\end{equation*}
be a Pisot integer in $\mathbb{Q}(\sqrt{p},\sqrt{q})$.
Then
\begin{equation*}
|c_1- c_4\sqrt{p}\sqrt{q}|<4,\, |c_2< c_4\sqrt{q}|<\frac{4}{\sqrt{p}},\,\,\, \text{and}\,\,\,|c_3- c_4\sqrt{p}|<\frac{4}{\sqrt{q}}.
\end{equation*}
\end{lemma}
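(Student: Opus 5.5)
The idea is to express $c_1,\dots,c_4$ through $\alpha$ and its three Galois conjugates, and then use that every conjugate other than $\alpha$ has modulus less than $1$. The Galois group of $\mathbb{Q}(\sqrt{p},\sqrt{q})$ consists of the four sign changes $\sqrt{p}\mapsto\pm\sqrt{p}$, $\sqrt{q}\mapsto\pm\sqrt{q}$. Since $\alpha$ is Pisot of degree $4$, it is not in a proper subfield, so its images are four distinct numbers:
\begin{align*}
\alpha_1&=\tfrac14(c_1+c_2\sqrt p+c_3\sqrt q+c_4\sqrt{pq}),&\alpha_2&=\tfrac14(c_1-c_2\sqrt p+c_3\sqrt q-c_4\sqrt{pq}),\\
\alpha_3&=\tfrac14(c_1+c_2\sqrt p-c_3\sqrt q-c_4\sqrt{pq}),&\alpha_4&=\tfrac14(c_1-c_2\sqrt p-c_3\sqrt q+c_4\sqrt{pq}).
\end{align*}
Here $\alpha_1=\alpha>1$, and because $\alpha$ is Pisot, $|\alpha_j|<1$ for $j=2,3,4$. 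These three are real because the field is totally real ($p,q>0$), but only the bound $|\alpha_j|<1$ is needed.

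Next I solve the linear system for the individual combinations by adding pairs of small conjugates:
\[
\alpha_2+\alpha_3=\tfrac12\left(c_1-c_4\sqrt{pq}\right),\qquad
\alpha_2+\alpha_4=\tfrac12\left(c_3-c_4\sqrt{p}\right)\sqrt q\cdot(-1),\qquad
\alpha_3+\alpha_4=\tfrac12\left(c_2-c_4\sqrt q\right)\sqrt p\cdot(-1).
\]
Each is checked directly. For instance, $\alpha_2+\alpha_4=\tfrac14(2c_1-2c_2\sqrt p)$. I will therefore recompute carefully and choose the right pairing, since each such pair sum isolates two of the four coefficients. The correct sums are:
\begin{itemize}
\item $\alpha_2+\alpha_3=\tfrac12(c_1-c_4\sqrt{pq})$;
\item $\alpha_3+\alpha_4=\tfrac12(c_1-c_3\sqrt q)$;
\item $\alpha_2+\alpha_4=\tfrac12(c_1-c_2\sqrt p)$.
\end{itemize}
Because $|\alpha_j|<1$, the triangle inequality gives $|c_1-c_4\sqrt{pq}|<4$, which is the first claimed inequality. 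It also gives $|c_1-c_3\sqrt q|<4$ and $|c_1-c_2\sqrt p|<4$.

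To reach the stated forms I combine these bounds with the differences of the small conjugates:
\begin{align*}
\alpha_3-\alpha_4&=\tfrac12\sqrt p\,(c_2-c_4\sqrt q),\\
\alpha_2-\alpha_4&=\tfrac12\sqrt q\,(c_3-c_4\sqrt p),
\end{align*}
which follow by subtracting the expressions above. Since $|\alpha_3-\alpha_4|<2$, this gives $|c_2-c_4\sqrt q|<4/\sqrt p$, and similarly $|c_3-c_4\sqrt p|<4/\sqrt q$. These are exactly the remaining two inequalities; the stray ``$<$'' in the statement is evidently a typo for ``$-$''.

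The main point of care, and the only real obstacle, is bookkeeping of the signs: choosing the pairs of non-dominant conjugates so that $\alpha_1$ never appears, since it is the one conjugate without a bound. Once the signs are pinned down, each inequality is a one-line triangle-inequality estimate using $|\alpha_j|<1$ for $j=2,3,4$.
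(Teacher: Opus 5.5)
Your proof is correct and follows the paper's route exactly. Once the garbled first display of pair sums is deleted, your identities $\alpha_2+\alpha_3=\tfrac12(c_1-c_4\sqrt{pq})$, $\alpha_3-\alpha_4=\tfrac12\sqrt{p}\,(c_2-c_4\sqrt{q})$ and $\alpha_2-\alpha_4=\tfrac12\sqrt{q}\,(c_3-c_4\sqrt{p})$ are the paper's relations for $2(\beta+\gamma)$, $2(\beta-\delta)/\sqrt{p}$ and $2(\gamma-\delta)/\sqrt{q}$, combined with the bound $1$ on the small conjugates; your explicit assumption $\deg\alpha=4$ is one the paper leaves tacit but genuinely needs (for $\alpha=1+\sqrt{2}$ in $\mathbb{Q}(\sqrt{2},\sqrt{3})$ one has $c_1=c_2=4$, $c_3=c_4=0$, and the first two inequalities fail).
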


\begin{proof}
The conjugates of $\alpha$ over $\mathbb{Q}$ can be written as
\begin{align*}
\beta&=\frac{1}{4}\left(c_1+c_2\sqrt{p}-c_3\sqrt{q}-c_4\sqrt{p}\sqrt{q}\right),\\
\gamma&=\frac{1}{4}\left(c_1-c_2\sqrt{p}+c_3\sqrt{q}-c_4\sqrt{p}\sqrt{q}\right),\\
\delta&=\frac{1}{4}\left(c_1-c_2\sqrt{p}-c_3\sqrt{q}+c_4\sqrt{p}\sqrt{q}\right).
\end{align*}

Since $\alpha$ is a Pisot number it follows that $\beta, \gamma, \delta \in (-1,1)$.
The three inequalities readily follow from the following equalities
\begin{equation*}
2(\beta+\gamma)= c_1-c_4\sqrt{p}\sqrt{q}, \frac{2(\beta-\delta)}{\sqrt{p}}=c_2-c_4\sqrt{q}, \,\, \text{and}\,\,\frac{2(\gamma-\delta)}{\sqrt{q}}=c_3-c_4\sqrt{p}.
\end{equation*}
\end{proof}

\begin{theorem}\label{theorembiquadratic}
Let $\alpha$ be a (biquadratic) Pisot number whose minimal polynomial is
\begin{equation*}
P(z)=z^4-az^3+bz^2-cz+d.
\end{equation*}
Suppose that $a\equiv 4 \pmod 6$, $b\equiv 0 \pmod 3$, $c\equiv 4\pmod 6$ and $d\equiv 1 \pmod 3$.
Then for all large enough $n$, $\lfloor\alpha^n\rfloor \equiv 3, 4 \pmod 6$, and therefore composite.
\end{theorem}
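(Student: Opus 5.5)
Let $\alpha=\alpha_1$ and let $\alpha_2,\alpha_3,\alpha_4$ be its conjugates, all of modulus less than $1$ since $\alpha$ is Pisot. The plan follows the proofs of Theorem \ref{thmcubic} and Theorem \ref{cubiceven}. Define the power sums $U_n=\alpha^n+\alpha_2^n+\alpha_3^n+\alpha_4^n$. By Newton's identities, $U_0=4$, $U_1=a$, $U_2=a^2-2b$ and $U_3=a^3-3ab+3c$. For $n\ge 4$ they satisfy
\begin{equation*}
U_n=aU_{n-1}-bU_{n-2}+cU_{n-3}-dU_{n-4}.
\end{equation*}
So $\{U_n\}$ is an integer sequence.

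The key step is to show $U_n\equiv 4\pmod 6$ for all $n\ge 0$, working separately modulo $2$ and modulo $3$.

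Modulo $2$: $a$ and $c$ are even, so $U_1\equiv 0$, $U_2=a^2-2b\equiv 0$ and $U_3\equiv 0$. The recurrence reduces to $U_n\equiv bU_{n-2}+dU_{n-4}\pmod 2$. Every term on the right has index at least $1$ and less than $n$, except $U_0=4\equiv 0$, which is even anyway. By induction $U_n\equiv 0\pmod 2$ for all $n$.

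Modulo $3$: $a\equiv 1$, $b\equiv 0$, $c\equiv 1$ and $d\equiv 1$. Then $U_0\equiv 1$, $U_1\equiv 1$, $U_2\equiv 1$ and $U_3\equiv 1+0+3\equiv 1$. The recurrence becomes $U_n\equiv U_{n-1}+U_{n-3}-U_{n-4}\pmod 3$. If the four previous terms are $\equiv 1$, this gives $U_n\equiv 1+1-1=1$. Hence $U_n\equiv 1\pmod 3$ for all $n$.

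Combining by CRT, $U_n\equiv 4\pmod 6$ for all $n$.

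Next, set $\varepsilon_n=\alpha_2^n+\alpha_3^n+\alpha_4^n$. This is real, since the non-real conjugates come in conjugate pairs. Since $\max|\alpha_i|<1$ for $i=2,3,4$, we have $|\varepsilon_n|\le 3\max|\alpha_i|^n\to 0$. So there is $N$ with $|\varepsilon_n|<1$ for $n\ge N$. We may also take $N$ large enough that $U_n\ge 6$, which holds eventually since $\alpha>1$. Then $\alpha^n=U_n-\varepsilon_n$, so $\lfloor\alpha^n\rfloor=U_n$ if $\varepsilon_n\le 0$ and $\lfloor\alpha^n\rfloor=U_n-1$ if $\varepsilon_n>0$. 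In both cases $\lfloor\alpha^n\rfloor\equiv 4$ or $3\pmod 6$.

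A number congruent to $4\pmod 6$ is even, and one congruent to $3\pmod 6$ is divisible by $3$. Since $\lfloor\alpha^n\rfloor\ge 5$ for large $n$, it is composite in either case.

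The only delicate points are the base cases of the mod $2$ and mod $3$ inductions, which must be checked against the exact Newton formulas for $U_2$ and $U_3$, and the fact that "large enough" cannot be made explicit. Unlike the cubic case, there is no uniform bound on $|\alpha_i|$, so the conclusion is only asymptotic, which matches the statement.
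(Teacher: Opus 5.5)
Your proof is correct and follows the same route as the paper: power sums $U_n$ with the Newton recurrence, then an induction giving $U_n\equiv 4\pmod 6$ (you make this explicit by working modulo $2$ and modulo $3$ separately, where the paper only says ``easy induction''), then $|\alpha_2^n+\alpha_3^n+\alpha_4^n|<1$ for large $n$ to conclude $\lfloor\alpha^n\rfloor\equiv 3,4\pmod 6$. Your $U_2=a^2-2b$ is the correct formula (the paper's $a^2-2ab$ is a typo), and requiring $U_n\ge 6$ settles the compositeness claim.
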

\begin{proof}
Let $\beta, \gamma, \delta$ be the algebraic conjugates of $\alpha$. For all $n\ge 0$ define $U_n=\alpha^n+\beta^n+\gamma^n+\delta^n$.

Then $U_0=4, U_1=a, U_2= a^2-2ab, U_3=a^3-3ab+3c$ and for all $n\ge 4$ we have $U_n=aU_{n-1}-bU_{n-2}+cU_{n-3}-dU_{n-4}$.
The modular restrictions on the coefficients of the minimal polynomial $P$ imply that $U_n\equiv 4\pmod 6$ for $0\le n\le 3$ and
an easy induction proves that $U_n\equiv 4 \pmod 6$ for all $n\ge 0$.
Since $\alpha$ is Pisot it follows that $\beta^n+\gamma^n+\delta^n$ belongs to the interval $(-1,1)$ for all sufficiently large $n$.
Eventually, this implies that  $\lfloor\alpha^n\rfloor \equiv 3, 4 \pmod 6$ for all large enough $n$, as claimed.
\end{proof}
Lemma \ref{lemmabiquadratic} allows us to identify many biquadratic Pisot numbers which satisfy the conditions from Theorem \ref{theorembiquadratic}.
In Table \ref{table1} we list all such numbers smaller than $100$ that we could find. For all these entries $\lfloor\alpha^n\rfloor \equiv 3, 4 \pmod 6$ as soon as $n\ge 3$.

\begin{table}[hbt!]
\begin{centering}
\begin{tabular}{c|c|c}
 $\alpha$ (approximate)           & $\alpha$             & Minimal Polynomial      \\
\midrule
\midrule
$16.881\ldots$ & $(8 + 5\sqrt{3} + 3\sqrt{7} + 2\sqrt{3}\sqrt{7})/2$ & $z^4 - 16z^3 - 15z^2 + 2z + 1$\\
\midrule
$46.120\ldots$ & $(23 + 13\sqrt{3} + 9\sqrt{7} + 5\sqrt{3}\sqrt{7})/2$, & $z^4 - 46z^3 - 6z^2 + 20z + 4$\\
\midrule
$52.962\ldots$ & $(26 + 15\sqrt{3} + 12\sqrt{5} + 7\sqrt{3}\sqrt{5})/2$ & $z^4 - 52z^3 - 51z^2 + 2z + 1$\\
\midrule
$53.581\ldots$ & $(26 + 12\sqrt{5} + 11\sqrt{6} + 5\sqrt{5}\sqrt{6})/2$ & $z^4 - 52z^3 - 84z^2 - 40z - 5$\\
\midrule
$58.508\ldots$ & $(29 + 17\sqrt{3} + 9\sqrt{11}+ 5\sqrt{3}\sqrt{11})/2$ & $z^4 - 58z^3 - 30z^2 + 14z + 1$\\
\midrule
$63.998\ldots$ & $(32 + 13\sqrt{6} + 12\sqrt{7} + 5\sqrt{6}\sqrt{7})/2$ & $z^4 - 64z^3 + 8z + 1$\\
\midrule
$64.786\ldots$ & $(32+9\sqrt{13}+7\sqrt{21}+2\sqrt{13}\sqrt{21})/2$ & $z^4-64z^3-51z^2+2z+4$\\
\midrule
$75.515\ldots$ & $(38 + 27\sqrt{2} + 22\sqrt{3} + 15\sqrt{2}\sqrt{3})/2$ &  $z^4 - 76z^3 + 36z^2 + 44z - 23$\\
\midrule
$76.627\ldots$ & $(38 + 27\sqrt{2} + 10\sqrt{15} + 7\sqrt{2}\sqrt{15})/2$ &  $z^4 - 76z^3 - 48z^2 - 4z + 1$\\
\midrule
$81.740\ldots$ & $(41 + 18\sqrt{5} + 9\sqrt{21} + 4\sqrt{5}\sqrt{21})/2$ & $z^4 - 82z^3 + 21z^2 + 20z - 5$\\
\midrule
$87.141\ldots$ & $(44 + 25\sqrt{3} + 12\sqrt{13} + 7\sqrt{3}\sqrt{13})/2$ & $z^4 - 88z^3 + 75z^2 - 16z + 1$\\
\midrule
$94.030\ldots$ & $(47 + 33\sqrt{2} + 21\sqrt{5} + 15\sqrt{2}\sqrt{5})/2$ & $z^4 - 94z^3 - 3z^2 + 14z + 1$\\
\midrule
$99.848\ldots$ & $(50 + 29\sqrt{3} + 12\sqrt{17} + 7\sqrt{3}\sqrt{17})/2$ & $z^4 - 100z^3 + 15z^2 + 14z - 2$\\
\midrule
\midrule
\end{tabular}
\caption{\small{Biquadratic Pisot numbers $\alpha< 100$ with the property that $\lfloor \alpha^n\rfloor\equiv 3, 4$ for all $n\ge 3$.}}
\label{table1}
\end{centering}
\end{table}

We conjecture that there are infinitely many biquadratic Pisot numbers with the property described above.

\section{Pisot numbers of arbitrary degree}\label{sectiondegreed}

So far we exhibited Pisot numbers of degrees $2$, $3$, and $4$ which have the property that the integer part of any power is prime for at most finitely many $n$. In \cite{ad}, Alkauskas and Dubickas constructed Pisot polynomials of arbitrary degree $d$ whose largest root $\alpha$ has the property that $\lfloor \alpha^n\rfloor$ is even for all $n\ge 1$. We generalize their result in the following Theorem.

\begin{theorem}\label{thmpisotd}
Let $d\ge 2, p\ge 2$, and $m\ge d^{d-1}$ be integers. Consider the polynomial
\begin{equation*}
P(z)=z^d -(3mp+1)z^{d-1}+2mpz^{d-2}-p.
\end{equation*}
Then, $P$ is a Pisot polynomial and $\alpha_1$, the largest root of $P$, has the property that $\alpha_1>3mp$ and $\lfloor \alpha_1^n\rfloor \equiv 0 \pmod p$ for all $n\ge 1$. In particular, $\lfloor \alpha^n\rfloor$ is always composite.
\end{theorem}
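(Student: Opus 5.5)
The plan has four steps: locate the roots of $P$ (this gives the Pisot property), compute the power sums of all roots modulo $p$, convert that into information about $\lfloor\alpha_1^n\rfloor$, and finally control the sign and size of the contribution of the small roots, which is where $m\ge d^{d-1}$ enters.

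\emph{Locating the roots.} For the Pisot property I will apply Rouch\'e's theorem on $|z|=1$, comparing $P$ with the term $-(3mp+1)z^{d-1}$. On the unit circle
\begin{equation*}
|(3mp+1)z^{d-1}|=3mp+1>1+2mp+p\ge |z^d+2mpz^{d-2}-p|,
\end{equation*}
since $m\ge 1$. Hence exactly $d-1$ roots of $P$ lie in the open unit disk. Next,
\begin{equation*}
P(3mp)=(3mp)^{d-2}\bigl(9m^2p^2-(3mp+1)3mp+2mp\bigr)-p=-(3mp)^{d-2}mp-p<0,
\end{equation*}
and $P(z)\to\infty$ as $z\to\infty$, so there is a real root $\alpha_1>3mp$. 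Since $P(0)=-p\neq 0$, $P$ is a Pisot polynomial, and it is irreducible as noted in Section \ref{sectionpisot}.

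\emph{Power sums modulo $p$.} Let $S_n=\sum_i\alpha_i^n$ be the sum of the $n$-th powers of all roots $\alpha_1,\dots,\alpha_d$. The elementary symmetric functions satisfy $e_1=3mp+1\equiv 1$, $e_2=2mp\equiv 0$, $e_d=\pm p\equiv 0$, and all other $e_k=0$, all modulo $p$. Newton's identities then give $S_1\equiv 1$ and $S_n\equiv e_1S_{n-1}\equiv S_{n-1}\pmod p$; the terms $k e_k$ with $k\ge 2$ vanish modulo $p$, including in the range $n\le d$. Therefore $S_n\equiv 1\pmod p$ for all $n\ge 1$. Consequently, if I can show
\begin{equation*}
0<\sum_{i\ge 2}\alpha_i^n<1 \quad\text{for all } n\ge 1,
\end{equation*}
then $\lfloor\alpha_1^n\rfloor=S_n-1\equiv 0\pmod p$. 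Since $\alpha_1^n>3mp>p$, this integer is a multiple of $p$ larger than $p$, hence composite.

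\emph{Controlling the small roots (main obstacle).} First I locate one real root $r\in(1/2,1)$. We have $P(1)=-mp-p<0$ and
\begin{equation*}
P(1/2)=2^{2-d}\bigl(mp/2-1/4\bigr)-p>0
\end{equation*}
once $m$ is large compared with $2^{d-1}$, which $m\ge d^{d-1}$ guarantees. In fact $r$ should be close to $2/3$, the small root of $z^2-(3mp+1)z+2mp$.

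The remaining $d-2$ roots should be tiny. Near $0$ the polynomial behaves like $2mp\,z^{d-2}-p$, so these roots have modulus about $(2m)^{-1/(d-2)}$. To make this rigorous I will apply Rouch\'e again on a circle $|z|=\rho$ with $\rho$ of order $m^{-1/(d-2)}$, say $\rho\le 1/d$ (this uses $m\ge d^{d-1}$). On that circle the term $2mp\,z^{d-2}$ dominates $z^d-(3mp+1)z^{d-1}$, so exactly $d-2$ roots lie inside; I still need to confirm that $p$ does not spoil the count, perhaps by comparing against $2mp\,z^{d-2}-p$ as a whole, or by taking $\rho$ slightly larger than the root modulus. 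The remaining root then has to be $r$.

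With these bounds,
\begin{equation*}
\Bigl|\sum_{i\ge 3}\alpha_i^n\Bigr|\le (d-2)d^{-n}<(1/2)^n<r^n,
\end{equation*}
which gives positivity. For the upper bound I need $r^n+(d-2)d^{-n}<1$; for $n\ge 2$ this follows from $r<1$ together with a slightly sharper estimate on $r$, such as $r\le 3/4$. For $n=1$ I can instead use the exact identity
\begin{equation*}
\sum_{i\ge 2}\alpha_i=3mp+1-\alpha_1
\end{equation*}
together with a sharper lower bound on $\alpha_1$, for instance from the sign of $P(3mp+1/3)$. The delicate bookkeeping is entirely in this step: pinning down $r$ and the radius of the tiny roots uniformly in $d$ and $p$.
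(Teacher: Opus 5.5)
Your overall route matches the paper's: one root just above $3mp$, one real root $r\in(1/2,1)$, the other $d-2$ roots inside $|z|=1/d$ by Rouch\'e with $2mpz^{d-2}$ dominant, power sums $\equiv 1\pmod p$, and $\lfloor\alpha_1^n\rfloor=S_n-1$. Newton's identities in place of the paper's Waring formula are fine. The extra unit-circle Rouch\'e argument for the Pisot property is also fine, though it needs $m>1$ rather than $m\ge 1$; this holds since $m\ge d^{d-1}\ge 2$.

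There is, however, a genuine gap at $n=1$. Your positivity chain $(d-2)d^{-n}<2^{-n}<r^n$ fails at $n=1$ once $d\ge 4$. For $d\ge 6$ it fails outright: $(d-2)/d\ge 2/3>r$, since $P(2/3)=-\frac{2}{9}(2/3)^{d-2}-p<0$ forces $r<2/3$. So the modulus bound on the tiny roots cannot show $\sum_{i\ge2}\alpha_i>0$ there. Your proposed patch points the wrong way. In $\sum_{i\ge 2}\alpha_i=3mp+1-\alpha_1$, a sharper \emph{lower} bound on $\alpha_1$ (such as $P(3mp+1/3)<0$) only pushes the sum down, and the bound $<1$ already follows from $\alpha_1>3mp$. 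What you actually need is $\alpha_1<3mp+1$. Without it, $\lfloor\alpha_1\rfloor$ could be $3mp+1$, which is not divisible by $p$. The missing line is the paper's Claim 1: $P(3mp+1)=2mp(3mp+1)^{d-2}-p>0$, so $3mp<\alpha_1<3mp+1$ and $\lfloor\alpha_1\rfloor=3mp$.

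The Rouch\'e count you leave open closes with $\rho=1/d$, keeping $p$ on the dominated side. You need
$2mp\,d^{2-d}>d^{-d}+(3mp+1)d^{1-d}+p$, that is, $(2d-3)m>d^{d-1}+(1+1/d)/p$.
This holds because $m\ge d^{d-1}$ and $2d-3\ge 3$ for $d\ge 3$, and it is exactly where the hypothesis on $m$ is used. Taking $\rho$ only slightly above $(2m)^{-1/(d-2)}$ would not work, since then $2mp\rho^{d-2}$ barely exceeds $p$ and cannot also absorb $(3mp+1)\rho^{d-1}$. Your estimate $r\le 3/4$ follows from $P(3/4)=-(3/4)^{d-2}(mp/4+3/16)-p<0$. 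Then $r^n+(d-2)d^{-n}\le 9/16+1/8<1$ for $n\ge 2$, as you intended.

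Finally, for $d=2$ your claim $P(1/2)>0$ is false when $m=2$, since then $P(1/2)=-1/4$. But for $d=2$ there are no tiny roots, so $0<r<1$ is all you need, with $r>0$ because $r\alpha_1=(2m-1)p$. The paper simply invokes Theorem \ref{thmbpositive}(a) for that case.
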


\begin{proof}

Note that if $d= 2$, the polynomial becomes $P(z)=z^2-(3mp+1)z+(2m-1)p$. Since, $3mp+1\ge (2m-1)p+2$ and $gcd(3mp+1-1, (2m-1)p)=p\ge 2$ the conclusion is a consequence of Theorem \ref{thmbpositive} (a).
Hence, for the rest of the proof we will assume that $d\ge 3$.

The first part of the proof deals with the locations of the roots of $P$. More precisely, we will prove the following three assertions:

\begin{claim}\label{claim1} There exists a root $\alpha_1$ in the interval $(3mp, 3mp+1)$. \end{claim}

\begin{claim}\label{claim2} There exists a root $\alpha_2$ in the interval $(1/2, 1-1/(2d))$.\end{claim}

\begin{claim}\label{claim3} The remaining roots, $\alpha_3, \alpha_4,\ldots, \alpha_d$, are all inside the circle $|z|=1/d$.\end{claim}

One can readily check that
\begin{equation*}
P(3mp)=-mp(3mp)^{d-2}-p<0 \quad \text{and} \quad P(3mp+1)=2mp(3mp+1)^{d-2}-p>0
\end{equation*}
which implies there existence of a root $\alpha_1$ in the interval $(3mp, 3mp+1)$. This proves Claim \ref{claim1}. Note that $\lfloor \alpha_1 \rfloor=3mp \equiv 0 \pmod p$.

We address Claim \ref{claim2} next.

On one hand we have
\begin{equation*}
P\left(\frac{1}{2}\right)=p\left(\frac{m}{2^{d-1}}-1\right)-\frac{1}{2^d}>p\left(\left(\frac{d}{2}\right)^{d-1}-1\right)-\frac{1}{2^d}\ge p\left(\left(\frac{3}{2}\right)^2-1\right)-\frac{1}{2^3}>0.
\end{equation*}

On the other hand,
\begin{align*}
P\left(1-\frac{1}{2d}\right)=&\left(1-\frac{1}{2d}\right)^{-d+2}\cdot\left(-mp+\frac{3mp}{2d}-\frac{1}{2d}+\frac{1}{4d^2}\right)-p<\\
<&\left(1-\frac{1}{2d}\right)^{-d+2}\cdot\left(-mp+\frac{2mp}{2d}\right)=mp\left(1-\frac{1}{2d}\right)^{-d+2}\cdot\left(-1+\frac{3}{2d}\right)<0.
\end{align*}
This proves Claim \ref{claim2}.

Finally, notice that
\begin{equation}\label{rouche}
|2mpz^{d-2}|> |z^d-(3mp+1)z^{d-1}-p| \quad \text{on the circle}\quad |z|=\frac{1}{d}.
\end{equation}

Indeed, it would suffice to show that
\begin{equation*}
\frac{2mp}{d^{d-2}}>\frac{1}{d^d}+\frac{3mp+1}{d^{d-1}}+p
\end{equation*}
which is equivalent to proving
\begin{equation*}
(2d-3)\frac{m}{d^{d-1}}>\frac{1}{p\,d^{d}}+\frac{1}{p\,d^{d-1}}+1.
\end{equation*}
However, since $m\ge d^{d-1}$, the left hand term is no smaller than $2d-3\ge 3$, while the right hand term is certainly smaller than $3$.
Thus, inequality \eqref{rouche} is valid.
Then, by Rouch\'{e}'s theorem it follows that $z^{d-2}$ and $P(z)$ have the same number of roots inside the circle $|z|=1/d$. It follows that
the remaining $d-2$ roots of $P$ have modulus smaller than $1/d$ and therefore Claim \ref{claim3} is proved.

\begin{claim}\label{claim4}
For every $n\ge 2$ we have
\begin{equation*}
0<\alpha_2^n+\alpha_3^n+\cdots +\alpha_d^n <1.
\end{equation*}
\end{claim}

From Claim \ref{claim2} and Claim \ref{claim3} it follows that
\begin{equation*}
\frac{1}{2^n}<\alpha_2^n<\left(1-\frac{1}{2d}\right)^n,\quad \text{and} \quad -\frac{d-2}{d^n}<\alpha_3^n+\alpha_4^n+\cdots +\alpha_d^n <\frac{d-2}{d^n},
\end{equation*}
from which we obtain
\begin{equation*}
\frac{1}{2^n}-\frac{d-2}{d^n}<\alpha_2^n+\alpha_3^n+\cdots +\alpha_d^n<\frac{d-2}{d^n}+\left(1-\frac{1}{2d}\right)^n.
\end{equation*}
It would therefore suffice to show that for all $n\ge 2$
\begin{equation*}
\frac{1}{2^n}-\frac{d-2}{d^n}>0 \quad \text{and} \quad \frac{d-2}{d^n}+\left(1-\frac{1}{2d}\right)^n<1.
\end{equation*}
The first inequality is equivalent to $d/2>(d-2)^{1/n}$ which is obviously true since already $d/2>(d-2)^{1/2}$ for all $d\ge 3$.
For the second inequality, note that the sequence with general term $x_n=\frac{d-2}{d^n}+\left(1-\frac{1}{2d}\right)^n$ is strictly decreasing.
Since $x_2=1-\frac{7}{4d^2}<1$ the proof of Claim \ref{claim4} is complete.

\begin{claim}\label{claim5}
For every $n\ge 1$ let $U_n=\alpha_1^n+\alpha_2^n+\alpha_3^n+\cdots \alpha_d^n$. Then, $U_n\equiv 1 \pmod {p}$.
\end{claim}

For every $1\le j \le d$, let  $\sigma_j$ be the $j$-th elementary symmetric polynomial in the $d$ variables $\alpha_1, \alpha_2, \ldots, \alpha_d$.
Note that by the definition of $P$ we have
\begin{align*}
\sigma_1&=\alpha_1+\alpha_2+\cdots +\alpha_d =3mp+1.\\
\sigma_2&=\sum_{1\le i_1<i_2} \alpha_{i_1}\alpha_{i_2}=2mp,\\
\sigma_3&=\sigma_4=\ldots = \sigma_{d-1}=0, \text{and}\\
\sigma_d&=\alpha_1\alpha_2\cdots\alpha_d=(-1)^{d-1}p.
\end{align*}
Note that $\sigma_1\equiv 1\pmod p$ while $\sigma_j\equiv 0 \pmod p$ for all $2\le j\le d$.
Waring's formula allows us to express $U_n$ in terms of $\sigma_1, \sigma_2, \ldots, \sigma_d$
\begin{equation*}
U_n=\sum c(i_1,i_2,\ldots, i_d)\,\sigma_1^{i_1}\sigma_2^{i_2}\cdots\sigma_d^{i_d}
\end{equation*}
where
\begin{equation*}
c(i_1,i_2,\ldots, i_d)=(-1)^{(i_2+i_4+i_6+\cdots)}\frac{(i_1+i_2+\cdots+i_d-1)!}{{i_1}!{i_2}!\cdots{i_d}!}\,n
\end{equation*}
are all integer coefficients and the summation is taken over all $d$-tuples $(i_1,i_2,\ldots, i_d)$ with nonnegative integer entries such that $i_1+2i_2+3i_3+\cdots +di_d=n$.
Since $\sigma_j\equiv 0 \pmod p$ for all $2\le j\le d$ it follows that
\begin{equation*}
U_n\equiv c(n, 0,0,\ldots, 0) \sigma_1^n \pmod{p} \quad \text{that is} \quad U_n\equiv 1\pmod p, \quad \text{as asserted}.
\end{equation*}

We are now in position to complete the proof of Theorem \ref{thmpisotd}.

As noted earlier $\lfloor \alpha_1 \rfloor=3mp\equiv 0\pmod p$.
From Claim \ref{claim4} it follows that for all $n\ge 2$ we have $\lfloor \alpha_1^n \rfloor=U_n-1$. Using Claim \ref{claim5}, we conclude that
$\lfloor \alpha_1^n \rfloor\equiv 0 \pmod p$.

\end{proof}

\section{\bf Conclusions and open questions}\label{conclusions}

In the previous sections we described many instances of numbers $\alpha$ for which the sequence with general term $\lfloor \alpha^n \rfloor$ is composite for all but possibly finitely many integers $n\ge 1$. All our examples were carefully chosen Pisot numbers. It is therefore reasonable to inquire whether it is possible to exhibit an explicit non-Pisot number with the same property. Can one find a rational number or a transcendental number that satisfies this condition?

From the other end, does there exist a number $\alpha>1$ such that $\lfloor \alpha^n \rfloor$ is prime for all but finitely many $n$?

{\footnotesize{

}}

\end{document}